\newtheorem{theorem}{Theorem}
\newtheorem{remark}[theorem]{Remark}
\newtheorem{corollary}[theorem]{Corollary}
\newtheorem{lemma}[theorem]{Lemma}
\newtheorem{definition}[theorem]{Definition}
\newtheorem{proposition}[theorem]{Proposition}
\newcommand{\comment}[1]{}
\renewcommand*\env@matrix[1][*\c@MaxMatrixCols c]{%
	\hskip -\arraycolsep
	\let\@ifnextchar\new@ifnextchar
	\array{#1}}
\definecolor{gold}{rgb}{0.83, 0.69, 0.22}
\definecolor{greenncs}{rgb}{0.0,0.62,0.42}
\begin{document}

	\title{Characterization of classical orthogonal polynomials in two continuous variables}
	\author{M. KENFACK NANGHO$^a$, K. JORDAAN$^b$ and B. JIEJIP NKWAMOUO$^c$\\
		\\
		$^{a,c}$Department of Mathematics and Computer Science,\\ University of Dschang, PO Box 67, Dschang, Cameroon\\
		$^b$ Department of Decision Sciences,\\ University of South Africa, PO Box 392, Pretoria, 0003, South Africa}
	\maketitle
	
	\begin{abstract}
		\noindent For a family of  polynomials in two {continuous} variables, orthogonal with respect to a weight function, we prove, under {suitable} conditions, {the equivalence of the following properties}: the matrix Pearson equation of the weight, the second order linear partial differential equation, the orthogonality of the gradients, the matrix Rodrigues formula involving tensor product{s} of matrices, and the so-called first structure relation. We then introduce a notion of classical orthogonal polynomials in two variables and relate the corresponding theory for weight functions and moment functionals. Finally, we present a nontrivial example that illustrates and delineates our contribution to the field.	
	\end{abstract}
	
	\begin{itemize}
		\item[]\textbf{Keywords}: Orthogonal polynomials in two {continuous} variables, Characterization, Partial Differential Equations.
		\item[]\textbf{Mathematics Subject Classification (2020)}: Primary 42C05; Secondary 33C50
	\end{itemize}

	\section{Introduction and Notations}
	\subsection{Introduction}
	Let $\left\{p_n\right\}_{n}$ be a family  of polynomials, orthogonal with respect to a weight function, $\rho$, on an interval {$(a,b)$} of $\mathbb{R}$ (set of real numbers). $\left\{p_n\right\}_n$ is classical if and only if its derivatives are also orthogonal. This property leads to  the existence of a sequence of real numbers $\mu_{n,m-1}$, $n\in \mathbb{N}$, $0\leq m\leq n$ and a polynomial $\phi$ of degree at most 2 such that \cite[Eq. (1.3.5)]{NIKI}
	\begin{equation*}\label{e1}
		\left[\rho(x) \Phi^{m}p_n^{(m)}\right]^{(1)}(x)=-\mu_{n,m-1}\rho(x)\Phi^{m-1}(x)p_n^{(m-1)}(x),
	\end{equation*}
	
	where $p_n^{(m)}=\frac{d^m}{dx^m}p_n$. Iterating and taking $m=n$, one has 
	
	\begin{equation}\label{e2}
		p_n(x)=\frac{(-1)^np_n^{(n)}(x)}{\rho(x)\prod_{j=0}^{n-1}\mu_{n,j}}\left[\rho\,\Phi^{n}\right]^{(n)}(x)
	\end{equation}
	which is the Rodrigues formula. Characterization of classical orthogonal polynomials in one variable is obtained by proving equivalence between (\ref{e2}), the Pearson equation of the weight, orthogonality of derivatives,  a Sturm-Liouville type equation, the so-called first structure relation and a non-linear relation (see {\cite[§5]{Al-Salam 1990}}). These properties are very important when using classical orthogonal polynomials in probability, partial differential equations and mathematical physics (see \cite{Williams, Fornberg} and references therein). Since the work of Krall and Sheffer \cite{Krall} introducing classical polynomials in two variables as polynomial solutions to the linear partial differential equation
	\begin{eqnarray}\label{e3}
		&&(\alpha x^2+d_1x+e_1y+f_1)\partial_x^2Y+(2\alpha xy+d_2x+e_2y+f_2)\partial_x\partial_yY\nonumber \\
		&&+(\alpha y^2+d_3x+e_3y+f_3)\partial_y^2Y+(\delta\,x+h_1)\partial_xY+(\delta\, y+h_2)\partial_yY+\lambda_nY=0,
	\end{eqnarray}
	$\lambda_n\neq 0$ for $n\neq 0$, where $n$ is the degree of the polynomial solution, the
	theory and application of orthogonal polynomials in two variables has {attracted attention} from various domains of mathematics 
	\cite{Koornwinder, Yuan, Barkry2013}. In 1975, Koornwinder \cite{Koornwinder} studied examples of two-variable analogues of Jacobi polynomials, and he introduced seven classes of orthogonal polynomials which he considered to be bivariate analogues of Jacobi polynomials. Some of these polynomials are classical according to the Krall and Sheffer definition and others are not. Therefore the Krall and Sheffer classification seems to be incomplete. Lyskova \cite{Lyskova} studied conditions on the polynomial coefficients in (\ref{e3}) in such a way that the partial derivatives of orthogonal polynomial solutions satisfy a partial differential equation of the same type. He obtained that $e_1=d_3=0$, i.e. the polynomial coefficients of the partial derivatives in $x$ (respectively in $y$) depend only on $x$ (respectively on $y$). An analogue of Rodrigues' formula for Krall and Sheffer classical orthogonal polynomials in two variables has been obtained by Suetin \cite{Suetin}. In fact, for $n$ a positive integer, he defines
	
	\begin{equation*}
		p_{n-i,i}=\frac{1}{\rho}\partial_x^{n-i}\partial_y^i\left(p^{n-i}q^i\rho\right),
	\end{equation*}
	where $\rho(x,\,y)$ is a weight function over a simply connected domain and $p(x,\,y)$
	,$q(x,\,y)$ are polynomials related with the polynomial  coefficients of (\ref{e3}). However, in several cases, Rodrigues' formula (\ref{e2})  provides polynomials of total degree greater than $n$ (see \cite{Littlejohn}) and therefore it cannot be used to construct a basis of orthogonal polynomials in two variables. {\cite{Littlejohn} also connected (\ref{e3}) to Pearson's equation and derived a moment functional for families of orthogonal polynomials solutions of (\ref{e3}). In \cite{Lee2000}, the author characterized orthogonal polynomial solutions of equation \eqref{e3} whose partial derivatives with respect to $x$ are orthogonal. \cite{Kim1997, Kim1998} investigated  polynomial solutions of (\ref{e3}) which are orthogonal with respect to a moment functional.}  
	In \cite{Fern}, by using matrix notation of polynomials in several variables introduced in \cite{Kowalski2,Kowalski} and modified in \cite{Xu}, the authors extended the definition by Krall and Sheffer of classical orthogonal polynomials in two variables as matrix polynomial families satisfying a second order linear partial differential equation analogue to (\ref{e3}), whose coefficients are some polynomials of degree at most two without any restriction on their shape.  Using this notation, the {authors} of \cite{AMFPP, Miguel2009,Fern,FPP} defined classical functional as a regular moment functional $u$ satisfying the  Pearson-type equation 
	\begin{equation}
		div(\Phi u)=\Psi^tu, 	\varPhi=\left(\begin{matrix}
			\phi_{1,1} & \phi_{1,2}\\
			\phi_{2,1} & \phi_{2,2}
		\end{matrix}\right), \varPsi=\left(\begin{matrix}
			\psi_{1}\\
			\psi_{2}
		\end{matrix}\right),
	\end{equation}
	
	where $\phi_{i,j}, i,j=1,2$ and $\psi_l,\, l=1,2$ are polynomials in two variables of degree at most 2 and 1 respectively, and the determinant of $\langle u,\Phi\rangle$ is different from zero.  Considering classical orthogonal polynomials in two variables as family of vector polynomials $\{\mathbb{P}_n\}$  orthogonal with respect to a classical functional $u$ they proved that for $m\geq 0$ \cite{AMFPP}
	\begin{equation}\label{inieq}
		div\left[(\Phi\otimes I_{2^m})\otimes I_{n+m+1}\nabla^{(m+1)}\mathbb{P}_{n+m}\right]+\left[\tilde{\Psi}^t\otimes I_{n+m+1}\right]\nabla^{(m+1)}\mathbb{P}_{n+m}=\Lambda_{n+m}^m\nabla^{(m)}\mathbb{P}_{n+m},
	\end{equation}
	
	where $\nabla^{(m+1)}$ is the successive gradient $\nabla^{(m+1)}=\nabla(\nabla^{(m)})$.
	Moreover, considering the higher order differential operators $\nabla^{\{n\}}$ and $div^{\{n\}}$ acting over matrices by means of (cf. also \cite{Miguel2009})
	\begin{align}\label{mdiv}
		\nabla^{\{n\}}A&=\left(\mathcal{D}_0^nA^t,\mathcal{D}_1^nA^t,\dots \mathcal{D}_n^nA^t\right)^t\in \mathcal{M}_{(n+1)h,k}\left(\mathcal{P}\right),\\
		div^{\{n\}}(B_0^t,B_1^t,\dots,B_n^t)^t&=\sum_{i=0}^{n}\mathcal{D}^n_iB_i\in\mathcal{M}_{h,k}(\mathcal{P}),
	\end{align}
	where $\mathcal{D}^n_i={n\choose i}\partial_x^{n-i}\partial_y^i,\,A,\,B_i\in\mathcal{M}_{h,k}(\mathcal{P}),\,i=0,1,\dots n$, for $n=1$, $div^{\{1\}}$ is the usual divergence operator,
	they established that, if there exist $P_0,P_1$, 2 lines and 2 columns matrices which entries are polynomials of degree one, such that
	\begin{subequations}
		\begin{empheq}[left=\empheqlbrace]{align}
			\partial_x(\phi_{1,1}\varPhi)+\partial_y(\phi_{2,1}\partial_y\varPhi)=\varPhi P_0,\nonumber\\
			\partial_x(\phi_{1,2}\partial_x\varPhi)+\partial_y(\phi_{2,2}\varPhi)=\varPhi P_1\nonumber,
		\end{empheq}
	\end{subequations} then for $n\geq m\geq 0$:\\ There exist matrices $F_i^{n,m}\in\mathcal{M}_{(m+1)(i+1)(n+1)}(\mathbb{R})$ such that \cite{FPP}
	\begin{equation}\label{FSR1}
		\Phi^{\{m\}}\nabla^{\{m\}}\mathbb{P}_n^t=\sum_{i=n-m}^{n+m}(I_{m+1}\otimes\mathbb{P}_{i}^t)F_i^{n,m};
	\end{equation}
	$\{\nabla^{\{m\}}\mathbb{P}_n^t\}_{n\geq m}$ satisfy the orthogonality relation \cite{FPP} 
	\begin{equation}\label{or1}
		\langle	u,\,\left(\nabla^{\{m\}}\mathbb{P}_n^t\right)^t\Phi^{\{m\}}\nabla^{\{m\}}\mathbb{P}_j^t\rangle =0,\,n\neq j
	\end{equation}
	The family of polynomials \cite{Miguel2009}
	\begin{equation}\label{e4b}
		\mathbb{Q}^t_n=\frac{1}{\rho}div^{\{n\}}\left(\mathbb{\varPhi}^{\{n\}}\rho\right),
	\end{equation} 
	is orthogonal with respect to the moment functional $u$, that is $\mathbb{Q}_n$ is up to a matrix multiplicative factor equal to $\mathbb{P}_n$, 	where $\rho$ is the nontrivial function, $C^2$ in some open set and satisfying the matrix Pearson-type equation  
	\begin{equation}\label{e5a}
		div(\varPhi\,\rho)=\varPsi^t\,\rho, \varPhi=\left(\begin{matrix}
			\phi_{1,1} & \phi_{1,2}\\
			\phi_{2,1} & \phi_{2,2}
		\end{matrix}\right), \varPsi=\left(\begin{matrix}
			\psi_{1}\\
			\psi_{2}
		\end{matrix}\right)
	\end{equation}   
	In 2018, Marcell\'{a}n et al. \cite{Marcellan} proved that the moment functional is classical if and only if its moments satisfy two three-term relations.
	
	Although the study of orthogonal polynomials based on moment functional is more general and leads to several algebraic properties, it avoids functional analysis property highlighted by the weighted approach. This approach gives more analytical  information to the polynomial sequence, see \cite{DEIFT1999,Ismail2005,Al-Salam 1990,Van2006} for orthogonal polynomial in one variable. Analytical properties of weight functions, as the boundary condition, play key role in these works. For instance, classical orthogonal polynomials in one variable are known to be orthogonal with respect to  weight function $\rho$, supported on an interval $(a,\,b)$, solution to the Pearson-equation $(\phi\rho)'=\psi\rho$ under the boundary condition \begin{equation}\label{ee5}
		p\rho\phi|_a^b=0\;\text{for all $p$, polynomial},
	\end{equation} where $\phi$ and $\psi$ are polynomials of degree at most 2 and 1 respectively. In 1999 using this definition, Al-Salam  \cite[§5]{Al-Salam 1990} characterized classical orthogonal polynomials in one variable by means of six equivalent properties. The objective of this work is to state and prove  a similar theorem for orthogonal polynomials in two variables.    
	For a weight function $\rho$ on a simply connected open subset of $\mathbb{R}^2$,  we use the vector notation of polynomials in two variables, the  following extension of the boundary condition (\ref{ee5})
	\begin{equation}\label{bc0a}
		\lim\limits_{j\rightarrow\infty}1_{\partial\Omega_j}\left(\rho \Phi\,\nabla\,u\right)\cdot\overrightarrow{n_j}=0,\; \Omega_j=\Omega\cap B(O,j),\; {\rm for~all}\; u\in\mathcal{M}_{1,n}\left(\mathcal{P}\right)
	\end{equation}
	as well as the differential condition \begin{subequations}\label{dq0}
		\begin{empheq}[left=\empheqlbrace]{align}
			\phi_{1,1}\partial_x\varPhi+\phi_{2,1}\partial_y\varPhi=\varPhi\nabla\left(\phi_{1,1},\phi_{2,1}\right),\\
			\phi_{1,2}\partial_x\varPhi+\phi_{2,2}\partial_y\varPhi=\varPhi\nabla\left(\phi_{1,2},\phi_{2,2}\right),
		\end{empheq}
	\end{subequations}
	to state and prove that for a family $\{\mathbb{P}_n\}_n$ of monic polynomials, orthogonal with respect to $\rho$, satisfying the Pearson equation
	\begin{equation}\label{peaeq}
		div(\rho\Phi)=\rho{\Psi^t},
	\end{equation}  
	{(\ref{peaeq})} is equivalent to: 
	\begin{enumerate}
		\item The orthogonality of the successive gradients of  $\{\nabla^{(m)}\mathbb{P}_{n+m}^t\}_n$ with respect to the matrix weight $\rho_{m}=\rho\Phi^{\otimes\,m}$ 
		\item The partial differential equation for which the closed form is 
		\[div\left((\rho_{m}\otimes\Phi)\nabla^{(m+1)}\mathbb{P}_{n+m}^t\right)+\rho_{m}\nabla^{(m)}\mathbb{P}_{n+m}^t\Lambda_{n+m,m}=0\]
		\item The  Rodrigues formula
		\begin{eqnarray}\label{e5}
			\mathbb{P}^t_n=\frac{(-1)^n}{\rho}div^{(n)}\left[\rho\,\varPhi^{\otimes\,n}\right]R_n,
		\end{eqnarray}
		where $R_n=\left(\nabla^{(n)}\mathbb{P}_n^t\right)\prod_{j=0}^{n-1}\varLambda_{n,j}^{-1}\in \mathcal{M}_{2^n,n+1}(\mathbb{R})$.
		\item The structure relation
		\begin{align*}
			\lefteqn{\left(\Phi\otimes I_{2^m}\right)\nabla^{(m+1)}\mathbb{P}^t_{n+m}}&\\
			&=\left(I_2\otimes \nabla^{(m)}\mathbb{P}^t_{n+m+1}\right)A_{n+1}^{n,m}+\left(I_2\otimes\nabla^{(m)}\mathbb{P}^t_{n+m}\right)A_{n}^{n,m}+\left(I_2\otimes\nabla^{(m)}\mathbb{P}^t_{n+m-1}\right)A_{n-1}^{n,m},
		\end{align*}
	\end{enumerate}

	where $\varLambda_{n,j}\in \mathcal{M}_{n+1}(\mathbb{R})$, space of $(n+1,\,n+1)$ matrices with coefficients in $\mathbb{R}$, $div^{(n)}$ is the successive $n^{th}$ divergence  $div^{(n)}A=div(div^{(n-1)}A)$  and $\nabla^{(n)}$ is the successive $n^{th}$ gradients $\nabla^{(n)}B=\nabla(\nabla^{(n-1)}B)$; $\varPhi$ is a 2 lines 2 columns, positive definite matrix polynomial of degree at most 2;  $\varPhi^{\otimes\,n}$ is the  $n^{th}$ first kind Kronecker product (i.e., the tensor product), $\varPhi^{\otimes\,n}=\Phi\otimes \varPhi^{\otimes\,n-1}$, $\varPhi^{\otimes\,1}=\varPhi$.

	\subsection{Notations}
	Let us recall  some fundamental notations and results. $\mathcal{P}=\mathbb{R}[x,y]$ is the space of polynomials in two variables with real coefficients. $\mathcal{P}_n$ is the subspace of polynomials of degree (also called total degree ) less or equal to $n$. $\Pi_n$ is the subspace of polynomials of degree $n$. Let $\mathbb{N}$ be the set of non-negative integers. For $\alpha=(\alpha_1,\;\alpha_2)\in \mathbb{N}^2$, $|\alpha|=\alpha_1+\alpha_2$. Let $p\in\mathcal{P}$ be a polynomial of degree $n$,
	\begin{eqnarray}
		\nonumber p(x,y)&=&\sum_{|\alpha|\leq n}c_{\alpha}x^{\alpha_1}y^{\alpha_2},\;\;c_{\alpha}\in\mathbb{R},\\
		&=&\sum_{k=0}^{n}C_{\alpha}X_k,\label{e6}
	\end{eqnarray}
	where $X_k$ is the column vector $X_0=1$, $X_k=(x^k,x^{k-1}y,\dots,xy^{k-1},\,y^k)^t$, 
	$k\geq 1$, of size $(k+1, 1)$ and $C_k\in \mathcal{M}_{1,\,k+1}(\mathbb{R})$. $\mathcal{M}_{h,\, k}(\mathbb{R})$ and $\mathcal{M}_{h,\, k}(\mathcal{P})$ denote the linear spaces of $h$ line(s) and $k$ column(s) real and polynomial matrices respectively. When $h=k$, the second index is omitted. The degree of matrix polynomial $\varPhi\in\mathcal{M}_{h,\, k}(\mathcal{P})$ is
	\begin{equation*}
		{\rm degree}\, {\varPhi}=max\left\{{\rm degree}\, \phi_{ij},1\leq i\leq h,\; 1\leq j\leq k\right\},
	\end{equation*}
	where $\phi_{ij}$ denotes $(i,j)$-entry of $\varPhi$.
	A polynomial system is a sequence of vectors  $\left\{\mathbb{P}_n\right \}_{n\geq 0}$ of increasing size such that \cite{Kowalski,Yuan,Xu}
	\begin{equation*}\mathbb{P}_n=\left(P_{n,0},\;P_{n-1,1},\dots,P_{1,n-1},\;P_{0,n}\right)^t\in \mathcal{M}_{n+1,1}\left(\mathcal{P}_n\right),\end{equation*}
	where $P_{i,j}$, $0\leq i\leq n$,  $0\leq j\leq n$, are polynomials of degree $n$ independent modulo $\mathcal{P}_{n-1}$. Expanding  $P_{i,j}$ in the basis $X_k$ and using (\ref{e6}), $\mathbb{P}_n(x,y)$ can be written as follows 
	\begin{equation*}
		\mathbb{P}_n(x,y)=\sum_{k=0}^{n}G_{n,k}X_k,
	\end{equation*}
	where $G_{n,k}\in \mathcal{M}_{n+1,\,k+1}\left(\mathbb{R}\right)$ and $G_{n,n}$ is the leading coefficient. $\mathbb{P}_n$ is monic if $G_{n,n}$ is equal to the matrix identity, $I_{n+1}$, of $\mathcal{M}_{n+1}\left(\mathbb{R}\right)$.\newline
	Let  $\Omega$ be a domain of $\mathbb{R}^2$ and $\rho$ a weight function on $\Omega$ (i.e. a non-negative and integrable function on $\Omega$). The system of polynomials $\left\{\mathbb{P}_n\right\}_n$ is orthogonal with respect to $\rho(x,y)$ if for $n\geq 0$
	\begin{equation*}
		\begin{cases}
			\int_{\Omega}X_m\mathbb{P}^t_n(x,y)\rho(x,y)dxdy&=0,\;\;m<n,\\
			\int_{\Omega}X_n\mathbb{P}^t_n(x,y)\rho(x,y)dxdy&=S_n,
		\end{cases}
	\end{equation*}
	where $S_n$ is  an invertible matrix of size $(n+1,\,n+1)$. For $n\geq 0$, the multiplication of $X_n$ by $x$ and $y$ are respectively given by
	\begin{equation}\label{mbxy}
		xX_n=L_{n,1}X_{n+1}\;\; {\rm and }\;\; yX_n=L_{n,2}X_{n+1},
	\end{equation}
	where $L_{n,1}$ and $L_{n,2}$ are the $(n+1,\,n+2)$ matrices 
	\begin{equation}\label{Ln}
		L_{n,1}=\left( \begin{matrix}
			1&0&\cdots&0&0 \\
			0&1&&\vdots&0\\
			\vdots&&\ddots &0&\vdots\\
			0&\cdots&0&1&0
		\end{matrix}\right)\hspace{1cm}{\rm and}\hspace{1cm}L_{n,2}=\left( \begin{matrix}
			0 &1&0&\cdots&0\\
			0&0&1&&\vdots\\
			\vdots&\vdots&&\ddots&0\\
			0&0&\cdots&0&1
		\end{matrix}\right).
	\end{equation}
	On the other hand, the partial derivative of $X_n$ with respect to $x$ and $y$ are defined by \cite{Marcellan}
	\begin{equation}\label{pd}
		\partial_xX_n=N_{n,1}^tX_{n-1}, \hspace{2cm} \partial_yX_n=N_{n,2}^tX_{n-1},
	\end{equation}
	where $N_{n,k}$, $k=1,2$ are $(n,\,n+1)$ matrices defined as follow{s} $N_{0,1}=N_{0,2}=0$, for $n\geq 1$
	\begin{equation}\label{Nn}
		N_{n,1}=\left( \begin{matrix}
			n&0&\cdots&0&0 \\
			0&n-1&&\vdots&0\\
			\vdots&&\ddots &0&\vdots\\
			0&\cdots&0&1&0
		\end{matrix}\right) \hspace{1cm}{\rm and}\hspace{1cm}N_{n,2}=\left( \begin{matrix}
			0 &1&0&\cdots&0\\
			0&0&2&&\vdots\\
			\vdots&\vdots&&\ddots&0\\
			0&0&\cdots&0&n
		\end{matrix}\right).
	\end{equation}
	
	$A=\left(a_{ij}\right)_{i,j=1}^m\in \mathcal{M}_m\left(P\right)$ and $B=\left(b_{ij}\right)_{i,j=1}^n\in\mathcal{M}_n\left(P\right)$. The Kronecker product of $A$ and $B$ denoted $A\otimes B$ is the block matrix \cite[p.228]{Bellman} 
	\begin{equation}\label{cprod}
		A\otimes B=\left(\begin{matrix}
			a_{11}B&\cdots&a_{1n}B\\
			\vdots&&\vdots\\
			a_{n1}B&\cdots&{a_{nn}B}
		\end{matrix}\right).
	\end{equation}
	For a positive integer $k\geq 1$, the  $k^{th}$ Kronecker product of a matrix $A\in \mathcal{M}_m\left(\mathcal{P}\right)$ is the matrix of size $m^k$ written as 
	\begin{equation*}
		A^{\otimes\,k}=A\otimes A^{\otimes\,k-1},\;\; A^{\otimes\,0}=1.
	\end{equation*}
	Moreover, if $A$ and $C$ are matrices of the same size, $B$ and $D$ are matrices of the same size, then \cite[p.228]{Bellman}
	\begin{equation}\label{p1e2}
		\left(AC\right)\otimes\left(BD\right)=(A\otimes\,B)(C\otimes\,D).
	\end{equation}

	Let $A,\,B\in\mathcal{M}_{m,n}(\mathcal{P})$. The gradient of $A$ and  the divergence  of $\left(\begin{matrix}
		A\\
		B
	\end{matrix}\right)$  are given respectively by
	$\nabla A=\left(\begin{matrix}
		\partial_x A\\
		\partial_y A
	\end{matrix}\right)$, and 
	$div\left(\begin{matrix}
		A\\
		B
	\end{matrix}\right)=\partial_xA+\partial_yB.$ The scalar product of $\left(\begin{matrix}
		A\\
		B
	\end{matrix}\right)$ by a vector $\overrightarrow{n}\left(\begin{matrix}
		n_x\\
		n_y
	\end{matrix}\right)$ is
	\begin{equation*}
		\left(\begin{matrix}
			A\\
			B
		\end{matrix}\right)\cdot\overrightarrow{n}=An_x+Bn_y.
	\end{equation*}
	The norm of a matrix $A=(a_{ij})_{i=1,j=1}^{m,n}$ is given by $\|A\|_{max}=max_{i,j}|a_{ij}|$.
	Let be $\Omega$ a subset of $\mathbb{R}^2$. $1_{\Omega}$ is the indicator function of $\Omega$. If the boundary of $\Omega$ exists, we denote it by   $\partial\Omega$. $B(O, \varepsilon),\,\varepsilon>0$, is the open ball of $\mathbb{R}^2$ centred at $O$ with radius $\varepsilon$.
	
	The structure of the paper is as follows. In Section 2, we prove several preliminary results, Section 3 is devoted to  our main contribution, the characterization theorem, and proposes a definition of classical orthogonal polynomials in two {continuous} variables {as bivariate polynomials that are orthogonal on a simply connected open subset of $\mathbb{R}^2$ with respect to a weight function $\rho$ satisfying the Pearson equation (\ref{peaeq}) under the boundary condition {(\ref{bc0a})}, the differential system (\ref{dq0}) and the  linear {independence} of polynomial components of the vector $\Psi$}. Section 4 connects the definition of bivariate classical orthogonal polynomials based on a weight with the one based on moment functionals, and provides examples. Moreover, this section connects our Rodrigues formula with that of (\ref{e4b}).
	
	\section{Fundamental results}
	
	This subsection contains results that will be used for the proof of the main theorem of this work.
	
	\begin{lemma}
		Let $m \geq 0$ be a fixed integer and $\left\{I_{2^m}\otimes X^t_n,\;n\in \mathbb{N}\right\}$ a family of matrix vector polynomials.
		\begin{enumerate}
			\item For all $n\geq\,0$, the following algebraic properties are satisfied
			{\small	\begin{subequations}
					\begin{empheq}{align}
						& x\left(I_{2^m}\otimes X^t_n\right)=\left(I_{2^m}\otimes X^t_{n+1}\right)\left(I_{2^m}\otimes L_{n,1}^t\right),\;\;y\left(I_{2^m}\otimes X^t_n\right)=\left(I_{2^m}\otimes X^t_{n+1}\right)\left(I_{2^m}\otimes L_{n,2}^t\right)\label{e12a},\\
						&x^2\left(I_{2^m}\otimes X^t_n\right)=\left(I_{2^m}\otimes X^t_{n+2}\right)\left(I_{2^m}\otimes L_{n+1,1}^t L_{n,1}^t\right)\label{e12b},\\
						& xy\left(I_{2^m}\otimes X^t_n\right)=\left(I_{2^m}\otimes X^t_{n+2}\right)\left(I_{2^m}\otimes L_{n+1,1}^t L_{n,2}^t\right)\label{e12c},\\
						&y^2\left(I_{2^m}\otimes X^t_n\right)=\left(I_{2^m}\otimes X^t_{n+2}\right)\left(I_{2^m}\otimes L_{n+1,2}^t L_{n,2}^t\right)\label{e12d},\\
						&\left(I_{2^m}\otimes X^t\right)A\left(I_{2^m}\otimes X^t_n\right)=\left(I_{2^m}\otimes X^t_{n+1}\right)\left(I_{2^m}\otimes L_n^t\right)\left(A\otimes\,I_{n+1}\right),\; A\in \mathcal{M}_{2^{m+1},2^m}\left(\mathbb{R}\right)\label{e12e},
					\end{empheq}
				\end{subequations}
			}
			\item For all $n\geq\,1$, the following  partial differentiation properties are satisfied
			{\small
				\begin{subequations}\label{e13}
					\begin{empheq}{align}
						& \partial_x\left(I_{2^m}\otimes X^t_n\right)=\left(I_{2^m}\otimes X^t_{n-1}\right)\left(I_{2^m}\otimes N_{n,1}\right),\; \partial_y\left(I_{2^m}\otimes X^t_n\right)=\left(I_{2^m}\otimes X^t_{n-1}\right)\left(I_{2^m}\otimes N_{n,2}\right),\label{e13a}\\
						& \partial_x^2\left(I_{2^m}\otimes X^t_n\right)=\left(I_{2^m}\otimes X^t_{n-2}\right)\left(I_{2^m}\otimes N_{n-1,1}N_{n,1}\right),\label{e13b}\\
						& \partial_x\partial_y\left(I_{2^m}\otimes X^t_n\right)=\left(I_{2^m}\otimes X^t_{n-2}\right)\left(I_{2^m}\otimes N_{n-1,1}N_{n,2}\right),\label{e13c}\\
						& \partial_y^2\left(I_{2^m}\otimes X^t_n\right)=\left(I_{2^m}\otimes X^t_{n-2}\right)\left(I_{2^m}\otimes N_{n-1,2}N_{n,2}\right),\label{e13d}
					\end{empheq}
			\end{subequations}}
			where
			\begin{equation*}
				L_{n}=\left(\begin{matrix}
					L_{n,1}\\
					L_{n,2}
				\end{matrix}\right),\;\;N_{n}=\left(\begin{matrix}
					N_{n,1}\\
					N_{n,2}
				\end{matrix}\right)\;{\rm and}\;A\in\mathcal{M}_{2^{m+1},2^m}\left(\mathbb{R}\right).
			\end{equation*}
		\end{enumerate}
	\end{lemma}
	
	\begin{proof}
		
		Since  
		\begin{equation}\label{e13e}\left(I_{2^m}\otimes\,X^t_n\right)=\left( \begin{matrix}
				X_n^t&0&\cdots&0 \\
				0&X_n^t&&\vdots\\
				\vdots&&\ddots &0\\
				0&\cdots&0&X_n^t
			\end{matrix}\right)\end{equation} using (\ref{mbxy}), we have 
		\begin{eqnarray*}
			x\left(I_{2^m}\otimes\,X^t_n\right)&=&\left( \begin{matrix}
				X_{n+1}^tL_{n,1}&0&\cdots&0 \\
				0&X_{n+1}^tL_{n,1}&&\vdots\\
				\vdots&&\ddots &0\\
				0&\cdots&0&X_{n+1}^tL_{n,1}
			\end{matrix}\right)\\
			&=&\left( \begin{matrix}
				X_{n+1}^t&0&\cdots&0 \\
				0&X_{n+1}^t&&\vdots\\
				\vdots&&\ddots &0\\
				0&\cdots&0&X_{n+1}^t
			\end{matrix}\right)\left( \begin{matrix}
				L_{n,1}&0&\cdots&0 \\
				0&{L_{n,1}}&&\vdots\\
				\vdots&&\ddots &0\\
				0&\cdots&0&L_{n,1}
			\end{matrix}\right)\\
			&=&\left(I_{2^m}\otimes\,X_{n+1}^t\right)\left(I_{2^m}\otimes\,L_{n,1}\right).
		\end{eqnarray*}	
		In a similar way, we obtain $y\left(I_{2^m}\otimes\,X^t_n\right)$ as well as  (\ref{e12b})-(\ref{e12d}). Let us prove (\ref{e12e}). Let $A\in \mathcal{M}_{2^{m+1},2^m}\left(\mathbb{R}\right)$. $A=\left(a_{ij}\right)_{i,j=1}^{2^m}$, with $a_{ij}=\left(a_{ij}^{(1)},\;a_{ij}^{(2)}\right)^t$
		\begin{eqnarray*}
			\left(I_{2^m}\otimes X^t\right)A\left(I_{2^m}\otimes X^t_n\right)={\left(X^ta_{ij}X_n^t\right)_{i,j=1}^{2^m}}=\left(a_{ij}^{(1)}xX_n^t+a_{ij}^{(2)}yX_n^t\right)_{i,j=1}^{2^m}
		\end{eqnarray*}
		Taking into account (\ref{mbxy}), we obtain
		\begin{eqnarray*}
			\left(I_{2^m}\otimes X^t\right)A\left(I_{2^m}\otimes X^t_n\right)&=&\left(X_{n+1}^t\left(a_{ij}^{(1)}L_{n,1}^t+a_{ij}^{(2)}L_{n,2}^t\right)\right)_{i,j=1}^{2^m}\\
			&=&\left(I_{2^m}\otimes\,X_{n+1}^t\right)\left(a_{ij}^{(1)}L_{n,1}^t+a_{ij}^{(2)}L_{n,2}^t\right)_{i,j=1}^{2^m}
		\end{eqnarray*}
		Observing  that 
		$a_{ij}^{(1)}L_{n,1}^t+a_{ij}^{(2)}L_{n,2}^t=L_n^t\left(\begin{matrix}
			a_{ij}^{(1)}I_{n+1}\\a_{ij}^{(2)}I_{n+1}
		\end{matrix}\right)$, we obtain
		\begin{eqnarray*}
			\left(I_{2^m}\otimes X^t\right)A\left(I_{2^m}\otimes X^t_n\right)&=&\left(I_{2^m}\otimes\,X_{n+1}^t\right)\left(I_{2^m}\otimes\,L_n^t\right)\left(\begin{matrix}
				a_{ij}^{(1)}I_{n+1}\\a_{ij}^{(2)}I_{n+1}
			\end{matrix}\right)_{i,j=1}^{2^m}\\
			&=&\left(I_{2^m}\otimes\,X_{n+1}^t\right)\left(I_{2^m}\otimes\,L_n^t\right)\left(A\otimes\,I_{n+1}\right).
		\end{eqnarray*}
		
		To prove (\ref{e13a}), apply $\partial_x$ from the left of (\ref{e13e}) and use the transposed version of (\ref{pd}) to obtain the result. (\ref{e13b})-(\ref{e13d}) are obtained in a similar way. 
	\end{proof}

	\begin{lemma}\label{prop2}\hspace{1cm}\\
		\begin{enumerate}
			\item Let A and B be two matrices, the following product rule is satisfied
			\begin{equation}\label{p1e1a}
				\partial_x (A\otimes B)=\partial_xA\otimes B+A\otimes\partial_xB.
			\end{equation}
			\item For all non-negative  integers $n$ and $m$, $0\leq m\leq n$, the polynomial $Q_{n,m}=	{\nabla^{(m)}\mathbb{P}^t_{n+m}}$ of total degree $n$ can be expanded in the system  as $\left\{\left(I_{2^m}\otimes X^t_k\right)\right\}_k$
			\begin{equation*}
				Q_{n,m}(x,\,y)=\sum_{k=0}^{n}\left(I_{2^m}\otimes X^t_k\right)G_{n,m,k},
			\end{equation*}
			where the leading coefficient of $\mathbb{Q}_{n,m}$ is the $(2^m(n+1), n+m+1)$-matrix given by the recurrence relation
			\begin{equation}\label{e8a}G_{n,\,m,\,n}=\left(\begin{matrix}
					I_{2^{m-1}}\otimes\,N_{n+1,1}&0\\
					0&I_{2^{m-1}}\otimes\,N_{n+1,2}
				\end{matrix}\right)\left(\begin{matrix}
					G_{n+1,\,m-1,\,n+1}\\G_{n+1,\,m-1,\,n+1}
				\end{matrix}\right),m\geq\,1,\;\; G_{n,0,n}=I_{n+1}.\end{equation}
		\end{enumerate}
		
	\end{lemma}
	\begin{proof}
		Write $A\otimes\,B$ in the form (\ref{cprod}) and use the usual product rule to obtain (\ref{p1e1a}). Let us prove the second item by induction on $m$, $m\geq\,0$. The item is obviously satisfied for $m=0$.  Let us assume it is satisfied up to $m-1$. Therefore
		\begin{equation*}
			\mathbb{Q}_{n,m}(x,y)=\nabla\left(\nabla^{(m-1)}\mathbb{P}_{n+1+m-1}^t\right)(x,\,y)=\nabla\left(\sum_{k=0}^{n+1}\left(I_{2^{m-1}}\otimes\,X_{k}^t\right)G_{n+1,m-1,k}\right).
		\end{equation*}
		Next, we use (\ref{e13a}) and rewrite the obtained sums from 0 to get
		\begin{eqnarray*}
			\mathbb{Q}_{n,m}(x,y)&=&\left(\begin{matrix}
				\sum_{k=0}^{n}\left(I_{2^{m-1}}\otimes\,X_{k}^t\right)\left(I_{2^{m-1}}\otimes\,N_{k+1,1}\right)G_{n+1,m-1,k+1}\\
				\\
				\sum_{k=0}^{n}\left(I_{2^{m-1}}\otimes\,X_{k}^t\right)\left(I_{2^{m-1}}\otimes\,N_{k+1,2}\right)G_{n+1,m-1,k+1}\end{matrix}\right)\\
			&=&\sum_{k=0}^{n}\left(\begin{matrix}
				I_{2^{m-1}}\otimes\,X_{k}^t&0\\
				0&I_{2^{m-1}}\otimes\,X_{k}^t
			\end{matrix}\right)\left(\begin{matrix}
				I_{2^{m-1}}\otimes\,N_{k+1,1}&0\\
				0&I_{2^{m-1}}\otimes\,N_{k+1,2}
			\end{matrix}\right)\left(\begin{matrix}
				G_{n+1,m-1,k+1}\\
				G_{n+1,m-1,k+1}
			\end{matrix}\right)\\
			&=&\sum_{k=0}^{n}\left(I_{2^{m}}\otimes\,X_k^t\right)G_{n,m,k}.
		\end{eqnarray*}
	\end{proof}
	
	\begin{proposition}\label{p2a} Let $\Omega$ be a simply connected open subset of $\mathbb{R}^2$ and $\rho$ a weight function on $\Omega$. Let be $A\in\mathcal{M}_2\left(\mathcal{P}\right)$, $\Omega_j=\Omega\cap B(O,j),\,j=1,2\dots,$ and $\overrightarrow{n_j}$ an outer vector normal of $\partial\Omega_j$.
		If the Neumann type boundary condition 
		\begin{equation}\label{Neumannc}
			\lim\limits_{j\rightarrow\infty}1_{\partial\Omega_j}\left(\rho A\,\nabla\,u\right)\cdot\overrightarrow{n_j}=0,\,\,\forall u\in \mathcal{P}
		\end{equation}
		is satisfied, then
		\begin{equation}\label{bc0}
			\int_{\Omega}div\left(\rho\left(I_2\otimes\,M\right)A^{\otimes\,m}\nabla\,N\right)dxdy=0,
		\end{equation}
		for all $M\in\mathcal{M}_{p,2^{m-1}}\left(\mathcal{P}\right)$ and $N\in\mathcal{M}_{2^{m-1},r}\left(\mathcal{P}\right)$, where $m$, $p$ and $r$ are positive integers.
	\end{proposition}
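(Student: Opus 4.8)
The plan is to deduce the matrix identity (\ref{bc0}) from the scalar hypothesis (\ref{Neumannc}) by applying the divergence theorem on the bounded exhausting domains $\Omega_j=\Omega\cap B(O,j)$ and then letting $j\to\infty$, after first stripping away the Kronecker structure. The first move I would make is a purely algebraic simplification of the integrand using the mixed–product rule (\ref{p1e2}). Writing $A^{\otimes m}=(A\otimes I_{2^{m-1}})(I_2\otimes A^{\otimes(m-1)})$ and applying (\ref{p1e2}) twice gives
\[(I_2\otimes M)A^{\otimes m}=(I_2\otimes M)(A\otimes I_{2^{m-1}})(I_2\otimes A^{\otimes(m-1)})=(A\otimes M)(I_2\otimes A^{\otimes(m-1)})=A\otimes P,\]
where $P:=M\,A^{\otimes(m-1)}\in\mathcal{M}_{p,2^{m-1}}(\mathcal{P})$. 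Thus the field whose divergence appears in (\ref{bc0}) collapses to $\rho\,(A\otimes P)\nabla N$, and the entire $m$–dependence (the higher tensor powers and the matrix $M$) is absorbed into the single polynomial matrix $P$. This is the step that renders the tensor powers harmless and reduces everything to a structure governed by the original $2\times2$ matrix $A$.

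Next I would fix an entry index $(i,k)$ and apply the classical divergence theorem on each $\Omega_j$ to the two-component vector field whose components are the $(i,k)$ entries of the top and bottom $p\times r$ blocks of $\rho\,(A\otimes P)\nabla N$, obtaining $\int_{\Omega_j}[div(\rho\,(A\otimes P)\nabla N)]_{ik}\,dxdy=\int_{\partial\Omega_j}(\,\cdot\,)_{ik}\cdot\overrightarrow{n_j}\,ds$. Expanding $A\otimes P=\left(\begin{matrix}a_{11}P & a_{12}P\\ a_{21}P & a_{22}P\end{matrix}\right)$ together with $\nabla N=(\partial_xN,\partial_yN)^t$, and grouping the scalar coefficients $a_{rs}$ of $A$, I would show that the boundary integrand equals the finite sum $\sum_{l}P_{il}\,\rho\,(A\nabla N_{lk})\cdot\overrightarrow{n_j}$, where $N_{lk}\in\mathcal{P}$ are the scalar entries of $N$ and $P_{il}\in\mathcal{P}$ those of $P$. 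Each summand is precisely $P_{il}$ times the density appearing in (\ref{Neumannc}) with the choice $u=N_{lk}$.

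Finally, using the product rule $A\nabla(P_{il}N_{lk})=P_{il}\,A\nabla N_{lk}+N_{lk}\,A\nabla P_{il}$, I would rewrite each summand so that the polynomial prefactor is folded into the argument of the gradient, re-expressing the boundary flux in terms of densities $\rho\,(A\nabla u)\cdot\overrightarrow{n_j}$ with $u\in\mathcal{P}$, which vanish in the limit by (\ref{Neumannc}). Passing to the limit $j\to\infty$ — legitimate because $\rho$ is integrable on $\Omega$ and the remaining factors are polynomials, so the integrals over $\Omega_j$ converge to the integral over $\Omega$ — then gives $\int_{\Omega}[div(\rho\,(A\otimes P)\nabla N)]_{ik}\,dxdy=0$ for every $(i,k)$, which is exactly (\ref{bc0}).

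The hard part will be this last step: the prefactors $P_{il}$ are genuine non-constant polynomials and cannot simply be discarded, so the crux is to show that the full, polynomial-weighted boundary flux tends to zero and not merely the bare densities $\rho\,(A\nabla u)\cdot\overrightarrow{n_j}$ that (\ref{Neumannc}) controls directly. This is where the strength of the hypothesis — its validity simultaneously for \emph{all} $u\in\mathcal{P}$ — must be exploited, together with the fact that the spherical part of $\partial\Omega_j$ recedes to infinity as $j\to\infty$; one also has to justify rigorously the interchange of limit and integral on the unbounded domain $\Omega$ and the entrywise application of the divergence theorem across the (possibly irregular) boundary $\partial\Omega_j=(\partial\Omega\cap B(O,j))\cup(\Omega\cap\partial B(O,j))$.
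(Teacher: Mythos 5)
Your first two steps coincide with the paper's own proof: the mixed-product collapse $(I_2\otimes M)A^{\otimes m}=A\otimes P$, $P=MA^{\otimes(m-1)}$, is the same algebraic reduction the paper performs (it writes the flux as $\rho(a_{11}n_x+a_{21}n_y)MA^{\otimes(m-1)}\partial_xN+\rho(a_{12}n_x+a_{22}n_y)MA^{\otimes(m-1)}\partial_yN$), and entrywise you both arrive at the same boundary density $\sum_l P_{il}\,\rho\,(A\nabla N_{lk})\cdot\overrightarrow{n_j}$, to be treated by the divergence theorem on $\Omega_j$ followed by a dominated-convergence passage to the limit. The gap is exactly where you flag ``the hard part'', and it is genuine: your proposed resolution of it does not work.

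The identity $P_{il}\,A\nabla N_{lk}=A\nabla(P_{il}N_{lk})-N_{lk}\,A\nabla P_{il}$ does not fold the prefactor into the gradient; it trades the polynomial prefactor $P_{il}$ in front of $\nabla N_{lk}$ for the polynomial prefactor $N_{lk}$ in front of $\nabla P_{il}$. Hypothesis (\ref{Neumannc}) kills the exact term $\rho\left(A\nabla(P_{il}N_{lk})\right)\cdot\overrightarrow{n_j}$, but the leftover term $\rho\,N_{lk}\left(A\nabla P_{il}\right)\cdot\overrightarrow{n_j}$ has precisely the original form, and applying the identity to it returns you to $\rho\,P_{il}\left(A\nabla N_{lk}\right)\cdot\overrightarrow{n_j}$: the procedure is circular. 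All it yields is that the symmetric combination $1_{\partial\Omega_j}\rho\left(P_{il}A\nabla N_{lk}+N_{lk}A\nabla P_{il}\right)\cdot\overrightarrow{n_j}=1_{\partial\Omega_j}\rho\left(A\nabla(P_{il}N_{lk})\right)\cdot\overrightarrow{n_j}$ tends to zero, never that each summand does; and no iteration of the product rule can remove a non-constant prefactor, because (\ref{Neumannc}) controls only fluxes of gradient fields $\nabla u$, $u\in\mathcal{P}$, while $P_{il}\nabla N_{lk}$ is in general not a gradient field (exactness would require $\partial_yP_{il}\,\partial_xN_{lk}=\partial_xP_{il}\,\partial_yN_{lk}$). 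The paper bridges this point with a different ingredient: it expands the whole pair $(p_{ik},q_{ik})^t$, prefactors already absorbed, as $\sum_{l}\nabla\mathbb{P}_l^t\,C_{n,l}^{i,k}$ with \emph{constant} coefficient vectors $C_{n,l}^{i,k}$, so the boundary density becomes a finite linear combination with constant coefficients of the quantities $1_{\partial\Omega_j}\left(\rho A\nabla\mathbb{P}_l^t\right)\cdot\overrightarrow{n_j}$, each of which vanishes by (\ref{Neumannc}). Note that this step rests on the asserted spanning property of $\left\{\nabla\mathbb{P}_l^t\right\}_l$ in $\mathcal{P}\times\mathcal{P}$ over $\mathbb{R}$ --- i.e., that the relevant polynomial pairs are themselves gradient fields --- and your difficulty is a symptom of the fact that this is the delicate point; but without some ingredient of this kind your proposal cannot be completed. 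The remaining issues you raise (entrywise divergence theorem, interchange of limit and boundary integral) are comparatively minor and are handled in the paper by Lebesgue's dominated convergence theorem.
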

	\begin{proof}
		First prove that $\lim\limits_{j\rightarrow\infty}1_{\partial\Omega_j}\left(\rho\left(I_2\otimes\,M\right)A^{\otimes\,m}\nabla N \right)\cdot \overrightarrow{n_j}=0$ then use the divergence theorem as well as Lebesgue's dominated convergence theorem to obtain (\ref{bc0}). For this purpose, 
		observe that 
		\begin{equation*}
			\rho\left(\left(I_2\otimes\,M\right)A^{\otimes\,m}\nabla N \right)\cdot \overrightarrow{n}=\rho\left(a_{11}n_x+a_{21}n_y\right)MA^{\otimes\,m-1}\partial_x\,N+\rho\left(a_{12}n_x+a_{22}n_y\right)MA^{\otimes\,m-1}\partial_y\,N
		\end{equation*}
		and take $MA^{\otimes\,m-1}\partial_x\,N=(p_{ik})_{i=1,k=1}^{p,r}$, $MA^{\otimes\,m-1}\partial_y\,N=(q_{ls})_{l=1,s=1}^{p,r}$ , where $p_{ik}$, $q_{ls}\in \mathcal{P}$, to obtain
		\begin{eqnarray*}
			\left(\left(I_2\otimes\,M\right)A^{\otimes\,m}\nabla N \right)\cdot \overrightarrow{n}&=&\left(\rho\left(a_{11}n_x+a_{21}n_y\right)p_{ik}+\rho\left(a_{12}n_x+a_{22}n_y\right)q_{ik}\right)_{i=1,\,k=1}^{p,\,r}\\
			&=&\left(\left(\left(\rho\,A\right)\cdot\overrightarrow{n}\right)\left(\begin{matrix}
				p_{ik}\\
				q_{ik}
			\end{matrix}\right)\right)_{i=1,\,k=1}^{p,\,r}.
		\end{eqnarray*}
		Since $\left\{\nabla \mathbb{P}^t_n\right\}_{n\geq 1}$ is a basis of $\mathcal{P}\times\mathcal{P}$, for fixed $i$ and $k$,  
		\begin{equation*}
			\left( \begin{matrix}
				p_{ik}\\
				q_{ik}
			\end{matrix}\right)={\sum_{l=0}^{z+1}}\nabla\mathbb{P}_l^tC_{n,l}^{i,\,k},\;\;z={\rm degree} \left( \begin{matrix}
				p_{ik}\\
				q_{ik}
			\end{matrix}\right).
		\end{equation*}
		Therefore, 
		\begin{eqnarray*}
			\left(\rho\left(I_2\otimes\,M\right)A^{\otimes\,m}\nabla N \right)\cdot \overrightarrow{n}
			&=&\left({\sum_{l=0}^{z+1}}\left(\left(\rho\,A\right)\cdot\overrightarrow{n}\right)\nabla\mathbb{P}_l^tC_{n,l}^{i,\,k}\right)_{i=1,\,k=1}^{p,\,r}.
		\end{eqnarray*}
		Replace $\overrightarrow{n}$  by $\overrightarrow{n}_j$,  multiply from the left by the indicator function of $\partial\Omega_j$, $1_{\partial\Omega_j}$, and use the fact that $\left(\left(\rho\,A\right)\cdot\overrightarrow{n_j}\right)\nabla\mathbb{P}_l^t= \left(\rho\,A\nabla\mathbb{P}_l^t\right)\cdot\overrightarrow{n_j}$ as well as the hypothesis (\ref{Neumannc}) to get
		\begin{equation}\label{mnbc}\lim\limits_{j\rightarrow\infty}1_{\partial\Omega_j}\left(\rho\left(I_2\otimes\,M\right)A^{\otimes\,m}\nabla N \right)\cdot \overrightarrow{n_j}=0.\end{equation}
		To end this proof, observe that 
		$$\int_{\Omega}div\left(\rho\left(I_2\otimes\,M\right)A^{\otimes\,m}\nabla\,N\right)dxdy=\lim\limits_{j\rightarrow\infty}\int_{{\Omega}_j}div\left(\rho\left(I_2\otimes\,M\right)A^{\otimes\,m}\nabla\,N\right)dxdy.$$
		Since $\Omega_j$ is a connected open bounded subset of $\mathbb{R}^2$ it has a piecewise smooth boundary. So, from the divergence theorem,
		$\int_{{\Omega}_j}div\left(\rho\left(I_2\otimes\,M\right)A^{\otimes\,m}\nabla\,N\right)dxdy=\int_{\partial{\Omega}_j}\left(\rho\left(I_2\otimes\,M\right)A^{\otimes\,m}\nabla\,N\right)\cdot \overrightarrow{n_j}dxdy.$
		Therefore
		\[ \int_{\Omega}div\left(\rho\left(I_2\otimes\,M\right)A^{\otimes\,m}\nabla\,N\right)dxdy=\lim\limits_{j\rightarrow\infty}\int1_{\partial{\Omega}_j}\left(\rho\left(I_2\otimes\,M\right)A^{\otimes\,m}\nabla\,N\right)\cdot \overrightarrow{n_j}dxdy\]
		and $\left\|\int_{\Omega}div\left(\rho\left(I_2\otimes\,M\right)A^{\otimes\,m}\nabla\,N\right)dxdy\right\|_{max}\leq\lim\limits_{j\rightarrow\infty}\int\left\|1_{\partial{\Omega}_j}\left(\rho\left(I_2\otimes\,M\right)A^{\otimes\,m}\nabla\,N\right)\cdot \overrightarrow{n_j}\right\|_{max}dxdy$. From (\ref{mnbc}),
		$\lim\limits_{j\rightarrow\infty}\left\|1_{\partial{\Omega}_j}\left(\rho\left(I_2\otimes\,M\right)A^{\otimes\,m}\nabla\,N\right)\cdot \overrightarrow{n_j}\right\|_{max}=0$. Therefore there exists $j_0\geq\,1$ such that \[\left\|1_{\partial{\Omega}_j}\left(\rho\left(I_2\otimes\,M\right)A^{\otimes\,m}\nabla\,N\right)\cdot \overrightarrow{n_j}\right\|_{max}\leq \left\|1_{\partial{\Omega}_{j_0}}\left(\rho\left(I_2\otimes\,M\right)A^{\otimes\,m}\nabla\,N\right)\cdot \overrightarrow{n_{j_{0}}}\right\|_{max}, j\geq j_0.\] Use Lebesgue's dominated convergence theorem to have
		\small{\[\lim\limits_{j\rightarrow\infty}\int\left\|1_{\partial{\Omega}_j}\left(\rho\left(I_2\otimes\,M\right)A^{\otimes\,m}\nabla\,N\right)\cdot \overrightarrow{n_j}\right\|_{max}dxdy=\int\lim\limits_{j\rightarrow\infty}\left\|1_{\partial{\Omega}_j}\left(\rho\left(I_2\otimes\,M\right)A^{\otimes\,m}\nabla\,N\right)\cdot \overrightarrow{n_j}\right\|_{max}dxdy=0.\]} Thus $\left\|\int_{\Omega}div\left(\rho\left(I_2\otimes\,M\right)A^{\otimes\,m}\nabla\,N\right)dxdy\right\|_{max}=0.$
	\end{proof}

	\begin{proposition}\label{p3} Let $A$ be a $(2,2)$-matrix function.\\
		\begin{enumerate}
			\item 	Consider $M\in\mathcal{M}_{p,2^{m-1}}\left(\mathcal{P}\right)$ and $N\in\mathcal{M}_{2^{(m-1)},q}\left(\mathcal{P}\right)$. The following relation holds
			\begin{equation}\label{p2e1}
				div\left[\left(I_2\otimes\,M\right)A^{\otimes\,m}\nabla\,N\right]=Mdiv(A^{\otimes\,m}\nabla N)+\left(\partial_xM,\partial_yM\right)\left(A^{\otimes\,m}\nabla N\right).
			\end{equation} 
			
			\item Let $m$ be a positive integer,{$B\in\mathcal{M}_2\left(\mathcal{P}\right)$} and $N\in \mathcal{M}_{2^{m},q}\left(\mathcal{P}\right)$. The following relation holds
			\begin{equation}\label{p2e2}
				div\left[\left( B\otimes\,A^{\otimes m}\right)\nabla\,N\right]=A^{\otimes\,m}\left[\left(\left(B\nabla\right)\cdot\nabla\right)N\right]+div\left(B\otimes\,A^{\otimes\,m}\right)\nabla\,N.
			\end{equation}
		\end{enumerate}
	\end{proposition}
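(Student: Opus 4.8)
The plan is to prove both identities by one and the same mechanism: expand the Kronecker structure into explicit blocks, apply the definition of the divergence (top block differentiated in $x$, bottom block in $y$), and invoke the Leibniz rule for the partial derivative of a matrix product. Nothing deeper is required than the product rule \eqref{p1e1a} together with the commutativity $\partial_x\partial_y N=\partial_y\partial_x N$ of the mixed partials on the polynomial entries. Throughout I take $B$ to be a $2\times2$ matrix function, as the symbols $B\otimes A^{\otimes m}$ and $(B\nabla)\cdot\nabla$ force.

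For the first item I would first note that, since $I_2=\left(\begin{smallmatrix}1&0\\0&1\end{smallmatrix}\right)$, the Kronecker product collapses to the block-diagonal form $I_2\otimes M=\left(\begin{smallmatrix} M & 0\\ 0 & M\end{smallmatrix}\right)$. Setting $W:=A^{\otimes m}\nabla N$ and splitting it into its top and bottom $2^{m-1}\times q$ halves $W=\left(\begin{smallmatrix} W_1\\ W_2\end{smallmatrix}\right)$, the product becomes $(I_2\otimes M)W=\left(\begin{smallmatrix} MW_1\\ MW_2\end{smallmatrix}\right)$. Applying the divergence and then the Leibniz rule gives $\partial_x(MW_1)+\partial_y(MW_2)=M\left(\partial_x W_1+\partial_y W_2\right)+(\partial_x M)W_1+(\partial_y M)W_2$. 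Here $M\left(\partial_x W_1+\partial_y W_2\right)=M\,div(A^{\otimes m}\nabla N)$, while $(\partial_x M)W_1+(\partial_y M)W_2=(\partial_x M,\partial_y M)\left(\begin{smallmatrix} W_1\\ W_2\end{smallmatrix}\right)=(\partial_x M,\partial_y M)(A^{\otimes m}\nabla N)$, which is exactly \eqref{p2e1}.

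For the second item I would write $B=(b_{ij})_{i,j=1}^{2}$, so that $B\otimes A^{\otimes m}$ is the $2\times2$ array of blocks $b_{ij}A^{\otimes m}$, and $\nabla N=\left(\begin{smallmatrix}\partial_x N\\ \partial_y N\end{smallmatrix}\right)$. Forming the product and then the divergence and applying the Leibniz rule produces two families of terms: those in which both derivatives land on $N$, and those in which one derivative lands on the coefficient block $b_{ij}A^{\otimes m}$. The first family assembles into $A^{\otimes m}\bigl(b_{11}\partial_x^2 N+b_{12}\partial_x\partial_y N+b_{21}\partial_y\partial_x N+b_{22}\partial_y^2 N\bigr)$, which, after using $\partial_x\partial_y N=\partial_y\partial_x N$, is precisely $A^{\otimes m}\left[((B\nabla)\cdot\nabla)N\right]$. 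The second family collects into $\bigl(\partial_x(b_{11}A^{\otimes m})+\partial_y(b_{21}A^{\otimes m})\bigr)\partial_x N+\bigl(\partial_x(b_{12}A^{\otimes m})+\partial_y(b_{22}A^{\otimes m})\bigr)\partial_y N$, which I would identify as $div(B\otimes A^{\otimes m})\nabla N$ by reading off the two horizontal blocks of $div(B\otimes A^{\otimes m})$.

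The Leibniz expansion itself is routine; the one point needing care, and the main obstacle, is keeping the two competing meanings of ``$div$'' straight. On the left of \eqref{p2e2} the divergence acts on a $2^{m+1}\times q$ matrix, splitting it vertically into two $2^m\times q$ halves, whereas $div(B\otimes A^{\otimes m})$ on the right acts on a $2^{m+1}\times 2^{m+1}$ matrix, splitting it into two $2^m\times 2^{m+1}$ halves and returning a $2^m\times 2^{m+1}$ matrix that then multiplies $\nabla N$. Checking that the first-order terms regroup exactly under this second convention, and that the second-order terms close up into the operator $(B\nabla)\cdot\nabla$ only after commutativity of mixed partials is invoked, is where the bookkeeping must be carried out with attention.
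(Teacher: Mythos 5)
Your proof is correct and follows essentially the same route as the paper's: both expand the Kronecker products into explicit block form, apply the definition of $div$, and conclude with the Leibniz product rule (plus equality of mixed partials for the second identity). The only cosmetic difference is that in item 1 you split $A^{\otimes m}\nabla N$ into its two vertical halves directly, whereas the paper expands $A^{\otimes m}=A\otimes A^{\otimes m-1}$ entrywise; the underlying argument is identical.
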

	
	\begin{proof}
		To prove the first item, write
		\begin{equation*}
			\left(I_2\otimes\,M\right)A^{\otimes\,m}\nabla N=\left(\begin{matrix}
				a_{11}MA^{\otimes\,m-1}\partial_x N+a_{12}MA^{\otimes\,m-1}\partial_y N\\
				a_{21}MA^{\otimes\,m-1}\partial_x N+a_{22}MA^{\otimes\,m-1}\partial_y N
			\end{matrix}\right),\, A=\left(\begin{matrix}
				a_{11}&a_{12}\\
				a_{21}&a_{22}
			\end{matrix}\right),
		\end{equation*}
		and apply $div$ on both sides to obtain the result. As for the last item of this proposition, apply $div$ on both sides of the relation 
		\begin{equation*}
			\left(B\otimes\,A^{\otimes\,m}\right)=\left(\begin{matrix}
				A^{\otimes\,m}\left(b_{11}\partial_x\,N+b_{12}\partial_y\,N\right)\\
				A^{\otimes\,m}\left(b_{21}\partial_x\,N+b_{22}\partial_y\,N\right)
			\end{matrix}\right),\;\, B=\left(\begin{matrix}
				b_{11}&b_{12}\\
				b_{21}&b_{22}
			\end{matrix}\right),
		\end{equation*}
		and use the usual product rule to obtain the result. The first item can also be obtained  by replacing  $\phi\otimes I_{2^{m-1}}$ by $A^{\otimes m}$ into the second item of \cite[Lemma 3.1]{Fern}. 
	\end{proof}
	\begin{proposition}\label{prop4}
		Let $m$ and $n$ be two non-negative integers, $0\leq m\leq n$. Let $\Phi$ be a {$2$}-matrix polynomial of total degree at most two. Let $\psi_i^{(m)}(x,y)=\left(I_{2^m}\otimes X^t\right)D_i^{(m)}+E_i^{(m)}$ with $D_i^{(m)}\in\mathcal{M}_{2^{m+1},\,2^m}\left(\mathbb{R}\right)$ and $E_i^{(m)}\in\mathcal{M}_{2^m}\left(\mathbb{R}\right)$, $i=1,2$ be two $2^m$-matrix polynomials of total degree one. Then
		\begin{equation}\label{e9}
			\left(\left(\Phi\nabla\right)\cdot\nabla\right)\mathbb{Q}_{n,m}+\psi_1^{(m)}\partial_x\mathbb{Q}_{n,m}+\psi_2^{(m)}\partial_y\mathbb{Q}_{n,m}=\left(I_{2^m}\otimes\,X_n^t\right)T_n^{(m)}G_{n,m,n}+\cdots,
		\end{equation}
		where
		\begin{equation*}
			T_n^{(m)}=\left(L_{n-1}^{(m)*}\right)^t\left(\left(A_1,2A_2,A_3\right)\otimes\,I_{2^m(n-1)}\right)N_n^{(m)*}+\left(I_{2^m}\otimes L_{n-1}^t\right)^t\left(\left(D_1^{(m)},D_2^{(m)}\right)\otimes\,I_n\right)N_n^{(m)}
		\end{equation*}
		with 
		$A_i=\left(a_{i1},a_{i2},a_{i3}\right)^t$, $i=1,2,3$, $D_j=\left(d_{j1},d_{j2}\right)^t$, $j=1,2$,
		\begin{eqnarray*}
			L_{n}^{(m)*}&=&
			\left(\begin{matrix}
				I_{2^m}\otimes\,L_{n-1,1}L_{n,1}\\
				I_{2^m}\otimes\,L_{n-1,2}L_{n,1}\\
				I_{2^m}\otimes\,L_{n-1,2}L_{n,2}
			\end{matrix}\right),\;\;N_{n}^{(m)*}=
			\left(\begin{matrix}
				I_{2^m}\otimes\,N_{n-1,1}N_{n,1}\\
				I_{2^m}\otimes\,N_{n-1,2}N_{n,1}\\
				I_{2^m}\otimes\,N_{n-1,2}N_{n,2}
			\end{matrix}\right),\\
			L_{n}^{(m)}&=&
			\left(\begin{matrix}
				I_{2^m}\otimes\,L_{n,1}\\
				I_{2^m}\otimes\,L_{n,2}
			\end{matrix}\right) \;{\rm and}\; N_{n}^{(m)}=
			\left(\begin{matrix}
				I_{2^m}\otimes\,N_{n,1}\\
				I_{2^m}\otimes\,N_{n,2}
			\end{matrix}\right),
		\end{eqnarray*}
		where $L_{n,j}$ and $N_{n,j}$ are matrices in (\ref{Ln}) and (\ref{Nn}) respectively. $L_0^{(0)*}\equiv\,0$, $N_0^{(0)*}\equiv\,0$, $L_n^{(0)}\equiv\,L_n$ and $N_n^{(0)}\equiv\,N_n$.  $G_{n,m,n}$ is the $\left(2^m(n+1),n+m+1\right)$-matrix given by (\ref{e8a}) and $T_n^{(m)}$ a  $({2^m(n+1)})$-matrix with
		$T_n^{(0)}\equiv\,T_n$.
	\end{proposition}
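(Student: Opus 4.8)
The plan is to extract the degree-$n$ (leading) part of the left-hand side by substituting the expansion $\mathbb{Q}_{n,m}=\sum_{k=0}^{n}(I_{2^m}\otimes X_k^t)G_{n,m,k}$ from Proposition~\ref{prop2} and retaining only what lands on the top block $(I_{2^m}\otimes X_n^t)$; all lower-degree output is absorbed into the ``$\cdots$''. Since $(\Phi\nabla)\cdot\nabla$ lowers total degree by two while $\psi_1^{(m)}\partial_x$ and $\psi_2^{(m)}\partial_y$ lower it by one, the only way to return to degree $n$ is to act on the top block $(I_{2^m}\otimes X_n^t)G_{n,m,n}$ with the degree-two part of $\Phi$ (through two derivatives) and with the degree-one part $(I_{2^m}\otimes X^t)D_i^{(m)}$ of $\psi_i^{(m)}$ (through one derivative). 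The constants $E_i^{(m)}$, the sub-leading coefficients of $\Phi$ and $\psi_i^{(m)}$, and every block $G_{n,m,k}$ with $k<n$ contribute only to ``$\cdots$''. I treat the second-order and first-order pieces separately, as they produce the two brackets of $T_n^{(m)}$.

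For the principal part, I first apply the second-derivative formulas (\ref{e13b})--(\ref{e13d}) to the top block, turning it into $(I_{2^m}\otimes X_{n-2}^t)\,N_n^{(m)*}\,G_{n,m,n}$ once the three second derivatives are stacked; the middle block is written with $N_{n-1,2}N_{n,1}$, which equals $N_{n-1,1}N_{n,2}$ because mixed partials commute. Writing the symmetric $\Phi$ with degree-two parts whose $(x^2,xy,y^2)$-coefficient vectors are $A_1,A_2,A_3$, the principal operator is $\phi_{11}^{(2)}\partial_x^2+2\phi_{12}^{(2)}\partial_x\partial_y+\phi_{22}^{(2)}\partial_y^2$, the factor $2$ arising from $\phi_{12}=\phi_{21}$. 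I then reinsert each monomial $x^2,xy,y^2$ via (\ref{e12b})--(\ref{e12d}), which lifts $(I_{2^m}\otimes X_{n-2}^t)$ back to $(I_{2^m}\otimes X_n^t)$ against the three blocks of $(L_{n-1}^{(m)*})^t$. Collecting the scalars $a_{ij}$ shows that the block indexed by a given monomial is weighted by the corresponding row of the $3\times3$ matrix $(A_1,2A_2,A_3)$; tensoring with $I_{2^m(n-1)}$ to match the $2^m(n-1)$-row blocks yields exactly $(L_{n-1}^{(m)*})^t\left((A_1,2A_2,A_3)\otimes I_{2^m(n-1)}\right)N_n^{(m)*}$.

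For the first-order part I apply (\ref{e13a}) to obtain $(I_{2^m}\otimes X_{n-1}^t)(I_{2^m}\otimes N_{n,j})G_{n,m,n}$ for $j=1,2$, and then multiply by the linear matrix $(I_{2^m}\otimes X^t)D_j^{(m)}$. Either I invoke (\ref{e12e}) directly with $A=D_j^{(m)}$, or I split this matrix into its $x$- and $y$-coefficient blocks, reinsert $x,y$ by (\ref{e12a}), and push the constant coefficient matrices to the right of $(I_{2^m}\otimes X_n^t)$ using the commutation identity $C(I_{2^m}\otimes X_n^t)=(I_{2^m}\otimes X_n^t)(C\otimes I_{n+1})$, valid for any $C\in\mathcal{M}_{2^m}(\mathbb{R})$. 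Pairing the two derivative directions against the blocks of $N_n^{(m)}$ and the two multiplication directions against the blocks of $(L_{n-1}^{(m)})^t$, with $D_1^{(m)},D_2^{(m)}$ assembled into $(D_1^{(m)},D_2^{(m)})\otimes I_n$, produces the second bracket of $T_n^{(m)}$. Adding the two pieces gives $(I_{2^m}\otimes X_n^t)T_n^{(m)}G_{n,m,n}+\cdots$.

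The only real difficulty is bookkeeping: one must repeatedly convert between the block-diagonal Kronecker factors $I_{2^m}\otimes(\cdot)$ produced by (\ref{e12a})--(\ref{e12e}) and (\ref{e13a})--(\ref{e13d}) and the stacked matrices $L_n^{(m)*},N_n^{(m)*},L_n^{(m)},N_n^{(m)}$ that define $T_n^{(m)}$, and must thread the matrix-valued (hence non-commuting) coefficients $D_i^{(m)}$ through these rearrangements, for which the mixed-product rule (\ref{p1e2}) is the main tool. Finally the degenerate cases $n=0,1$ need no separate argument: for $n\le 1$ the factor $N_n^{(m)*}$ contains $N_{0,j}=0$ (or is set to $0$ by the convention $N_0^{(0)*}=0$), so the second-order bracket vanishes, matching the fact that the principal operator annihilates polynomials of degree $\le 1$.
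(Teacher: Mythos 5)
Your proposal is correct and follows essentially the same route as the paper's own proof: expand $\mathbb{Q}_{n,m}$ with leading block $(I_{2^m}\otimes X_n^t)G_{n,m,n}$, apply the differentiation formulas (\ref{e13a})--(\ref{e13d}), lift back to degree $n$ with the multiplication identities (\ref{e12a})--(\ref{e12e}), and collect the surviving coefficients into the two brackets of $T_n^{(m)}$. Your explicit remarks on the degree-counting, on $N_{n-1,1}N_{n,2}=N_{n-1,2}N_{n,1}$ from equality of mixed partials, and on the degenerate cases $n\le 1$ are sound refinements of the same argument rather than a different method.
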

	\begin{proof}
		
		Observe that 
		\begin{equation*}
			\left(\left(\Phi\nabla\right)\cdot\nabla\right)\,\mathbb{Q}_{n,m}+\Psi\nabla\mathbb{Q}_{n,m}=\phi_{1,1}\partial_x^2\mathbb{Q}^t_{n,m}+2\phi_{1,2}\partial_{xy}^2\mathbb{Q}_{n,m}+\phi_{2,2}\partial_y^2\mathbb{Q}_{n,m}+\psi_1^{(m)}\partial_x\mathbb{Q}_{n,m}+\psi_2^{(m)}\partial_y\mathbb{Q}_{n,m}.
		\end{equation*}
		Write  $\mathbb{Q}_{n,m}=\left(I_{2^m}\otimes\,X^t_n\right)G_{n,m,n}+\dots$ and use the partial derivatives of $\left(I_{2^m}\otimes\,X^t_{n}\right)$ given by (\ref{e13}) to obtain 
		\begin{align}\label{e8}
			&\left(\left(\Phi\nabla\right)\cdot\nabla\right)\,\mathbb{Q}_{n,m}+\psi_1^{(m)}\partial_x\mathbb{Q}_{n,m}+\psi_2^{(m)}\partial_y\mathbb{Q}_{n,m}\\
			&=\left[\phi_{1,1}\left(I_{2^m}\otimes\,X^t_{n-2}\right)\left(I_{2^m}\otimes\,N_{n-1,1}N_{n,1}\right)+2\phi_{1,2}\left(I_{2^m}\otimes\,X^t_{n-2}\right)\left(I_{2^m}\otimes\,N_{n-1,1}N_{n,2}\right)\right]G_{n,m,n}\nonumber\\
			&+\left[\phi_{2,2}\left(I_{2^m}\otimes\,X^t_{n-2}\right)\left(I_{2^m}\otimes\,N_{n-1,2}N_{n,2}\right)+\psi_1^{(m)}\left(I_{2^m}\otimes\,X^t_{n-1}\right)\left(I_{2^m}\otimes\,N_{n,1}\right)\right]G_{n,m,n}\nonumber\\
			&+\left[\psi_2^{(m)}\left(I_{2^m}\otimes\,X^t_{n-1}\right)\left(I_{2^m}\otimes\,N_{n,2}\right)\right]G_{n,m,n}+\dots\nonumber.
		\end{align}
		Since we assumed there is no restriction on the shape of the polynomials $\phi_{ij},\, i,j=1,2$ and $\psi_k,\,k=1,2$, expand $\phi_{1,1}$ as
		\begin{equation*}
			\phi_{1,1}(x,y)=a_{11}x^2+a_{12}xy+a_{13}y^2+b_{11}x+b_{12}y+c_1=X_2^tA_1+X^tB_1+c_1,
		\end{equation*}
		where $A_1=\left(a_{11},\,a_{12},\, a_{13}\right)$, $B_1=\left(b_{11},\,b_{12}\right)$ and take into account (\ref{e12a})-(\ref{e12d}) to obtain 
		\begin{align*}
			&\phi_{1,1}(x,y)\left(I_{2^m}\otimes\,X^t_{n-2}\right)\\
			&=\left(I_{2^m}\otimes\,X^t_{n}\right)\left[a_{11}\left(I_{2^m}\otimes\,L_{n-1,1}^tL_{n-2,1}^t\right)+a_{12}\left(I_{2^m}\otimes\,L_{n-1,1}^tL_{n-2,2}^t\right)+a_{13}\left(I_{2^m}\otimes\,L_{n-1,2}^tL_{n-2,2}^t\right)\right]\\
			&+\left(I_{2^m}\otimes\,X_{n-1}^t\right)\left[b_{11}\left(I_{2^m}\otimes\,L_{n-2,1}^t\right)+b_{12}\left(I_{2^m}\otimes\,L_{n-2,2}^t\right)\right]+c_1\left(I_{2^m}\otimes\,X_{n-2}^t\right)\\
			&=\left(I_{2^m}\otimes\,X^t_{n}\right)\left(L_{n-1}^{(m)*}\right)^t\left(A_1\otimes\,I_{2^m(n-1)}\right)+\left(I_{2^m}\otimes\,X^t_{n-1}\right)\left(L_{n-2}^{(m)}\right)^t\left(B_1\otimes\,I_{2^m(n-1)}\right)+c_1\left(I_{2^m}\otimes\,X^t_{n-2}\right).
		\end{align*}
		Replace $\left( A_1,\,B_1, c_1\right)$ by $\left( A_i,\,B_i, c_i\right)$, $i=2,\,3$,  to get $\phi_{1,2}(x,y)\left(I_{2^m}\otimes\,X^t_{n-2}\right)$ and $\phi_{2,2}(x,y)\left(I_{2^m}\otimes\,X^t_{n-2}\right)$ in terms of $\left(I_{2^m}\otimes\,X^t_{n}\right)$, $\left(I_{2^m}\otimes\,X^t_{n-1}\right)$ and $\left(I_{2^m}\otimes\,X^t_{n-2}\right)$. Use (\ref{e12e}) to have 
		\begin{align*}
			\psi_1^{(m)}\left(I_{2^m}\otimes\,X^t_{n-1}\right)&=\left(I_{2^m}\otimes\,X^t\right)D_1^{(m)}\left(I_{2^m}\otimes\,X^t_{n-1}\right)+E_1^{(m)}\left(I_{2^m}\otimes\,X^t_{n-1}\right)\\
			&=\left(I_{2^m}\otimes\,X^t_{n}\right)\left(I_{2^m}\otimes\,L_{n-1}\right)\left(D_1^{(m)}\otimes\,I_n\right)+E_1^{(m)}\left(I_{2^m}\otimes\,X^t_{n-1}\right).
		\end{align*}
		Replace $\left(D_1^{(m)},\,E_1^{(m)}\right)$ by $\left(D_2^{(m)},\,E_2^{(m)}\right)$ to expand $\psi_2^{(m)}(x,y)\left(I_{2^m}\otimes\,X^t_{n-1}\right)$ in terms of $\left(I_{2^m}\otimes\,X^t_{n}\right)$ and $\left(I_{2^m}\otimes\,X^t_{n-1}\right)$.
		Finally, substitute into (\ref{e8}) to obtain
		\begin{align*}
			&\left(\Phi\nabla\right)\cdot\nabla\,\mathbb{Q}_{n,m}+\psi_1^{(m)}\partial_x\mathbb{Q}_{n,m}+\psi_2^{(m)}\partial_y\mathbb{Q}_{n,m}\\
			&=\left(I_{2^m}\otimes\,X^t_{n}\right)\left(L_{n-1}^{(m)*}\right)^t\left[\left(A_{1}\otimes\,I_{2^m(n-1)}\right)\left(I_{2^m}\otimes\,N_{n-1,1}N_{n,1}\right)+2\left(A_{2}\otimes\,I_{2^m(n-1)}\right)\left(I_{2^m}\otimes\,N_{n-1,1}N_{n,2}\right)\right]\nonumber\\
			&\times\,G_{n,m,n}+\left(I_{2^m}\otimes\,X^t_{n}\right)\left(L_{n-1}^{(m)*}\right)^t\left[\left(A_{3}\otimes\,I_{2^m(n-1)}\right)\left(I_{2^m}\otimes\,N_{n-1,2}N_{n,2}\right)\right]G_{n,m,n}\nonumber\\
			&+\left(I_{2^m}\otimes\,X^t_{n}\right)\left(I_{2^m}\otimes\, L_{n-1}^t\right)\left[\left(D_1^{(m)}\otimes\,I_n\right)\left(I_{2^m}\otimes\,N_{n,1}\right)+\left(D_2^{(m)}\otimes\,I_n\right)\left(I_{2^m}\otimes\,N_{n,2}\right)\right]G_{n,m,n}+\dots\nonumber\\
			&=\left(I_{2^m}\otimes\,X^t_{n}\right)T_n^{(m)}G_{n,m,n}+\dots\,.\nonumber
		\end{align*} 
		with $T_n^{(m)}=\left(L_{n-1}^{(m)*}\right)^t\left(\left(A_1,\,2A_2,\,A_3\right)\otimes\,I_{2^m(n-1)}\right)N_n^{(m)*}+\left(I_{2^m}\otimes\, L_{n-1}^t\right)\left(\left(D_1^{(m)},D_2^{(m)}\right)\otimes\,I_n\right)N_n^{(m)}$.
	\end{proof}
	
	\begin{proposition}\label{prop3}
		Let $\left\{\mathbb{P}_n\right\}_n$ be a family of vector polynomials, orthogonal with respect to the weight function $\rho$, satisfying the Pearson equation $div\left(\rho\Phi\right)=\rho\left(\psi_1,\psi_2\right)$ with  the Neumann boundary condition (\ref{Neumannc}), where $\Phi\in\mathcal{M}_2\left(\mathcal{P}\right)$ and $\psi_i(X)=X^tD_i+E_i$ is a polynomial of total degree one, $i=1,\,2$,  with $det(D_1,\,D_2)\neq 0$  then
		\begin{equation*}
			{\rm degree}\left(\left(\left(\Phi\nabla\right)\cdot\nabla\right)\,\mathbb{P}^t_n+\Psi\nabla\mathbb{P}_n^t\right)=n,\;\;\forall n,\; n\geq 1.
		\end{equation*}
	\end{proposition}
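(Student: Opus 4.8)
The plan is to reduce the statement to the non-vanishing of the leading coefficient of the image and then to use orthogonality to settle the one degree where a purely algebraic argument can fail. Write $\mathcal{L}:=(\Phi\nabla)\cdot\nabla+\Psi\nabla=\phi_{11}\partial_x^2+2\phi_{12}\partial_x\partial_y+\phi_{22}\partial_y^2+\psi_1\partial_x+\psi_2\partial_y$. Since $\deg\Phi\le 2$ and $\deg\psi_i\le 1$, the operator $\mathcal{L}$ maps $\mathcal{P}_n$ into $\mathcal{P}_n$, so the degree is automatically at most $n$ and the only issue is that it could drop. Applying Proposition \ref{prop4} with $m=0$, for which $\mathbb{Q}_{n,0}=\mathbb{P}_n^t$ and $G_{n,0,n}=I_{n+1}$, gives
\begin{equation*}
\big((\Phi\nabla)\cdot\nabla\big)\mathbb{P}_n^t+\Psi\nabla\mathbb{P}_n^t=X_n^t\,T_n+\cdots,
\end{equation*}
where the omitted terms have degree $<n$ and $T_n=T_n^{(0)}$ is the explicit matrix of that proposition. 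As the leading coefficient of $\mathbb{P}_n$ is invertible and the entries of $X_n$ are linearly independent, the degree equals $n$ if and only if $T_n\neq0$; this is what must be shown.

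Next I would exploit the explicit form of $T_n$, which represents the map $\mathcal{L}_n:=\mathcal{A}_n+\mathcal{D}_n$ induced by $\mathcal{L}$ on the top-degree part $\Pi_n$. Here $\mathcal{A}_n$ (from the block $(A_1,2A_2,A_3)$) is the second-order operator built from the degree-two parts of the $\phi_{ij}$, and $\mathcal{D}_n$ (from the block $L_{n-1}^t\big((D_1,D_2)\otimes I_n\big)N_n$) is the first-order operator $(d_{11}x+d_{12}y)\partial_x+(d_{21}x+d_{22}y)\partial_y$. Let $u$ be a (possibly complex) linear form with $\mathcal{D}u=\gamma_1 u$; since $\gamma_1\gamma_2=\det(D_1,D_2)\neq0$, where $\gamma_1,\gamma_2$ are the eigenvalues of $(D_1,D_2)$, we have $\gamma_1\neq0$. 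Because $\mathcal{D}$ is a derivation, $\mathcal{D}(u^n)=n\gamma_1 u^n$, while $\mathcal{A}_n(u^n)$ contributes $n(n-1)\tilde a\,u^n$, with $\tilde a$ the coefficient of $u^2$ in the $\partial_u^2$-coefficient of $\mathcal{A}$. Hence the coefficient of $u^n$ in $\mathcal{L}_n(u^n)$ is $n\big((n-1)\tilde a+\gamma_1\big)$, which is nonzero — so $T_n\neq0$ — for every $n\ge1$ except at most the single value solving $(n-1)\tilde a=-\gamma_1$; for $n=1$ in particular $\mathcal{A}_1=0$ and $T_1=(D_1,D_2)$ is invertible.

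To cover the possibly exceptional degree I would bring in orthogonality. Rewriting $\mathcal{L}f=\rho^{-1}\,div(\rho\Phi\nabla f)$ by means of the Pearson equation, using the symmetry of $\Phi$, and killing the boundary term through (\ref{Neumannc}) exactly as in Proposition \ref{p2a}, one finds that $\mathcal{L}$ is symmetric for $\langle f,g\rangle=\int_\Omega fg\,\rho\,dxdy$. Since $\mathcal{L}$ preserves $\mathcal{P}_n$, orthogonality gives $\langle\mathcal{L}\mathbb{P}_n,\mathbb{P}_m\rangle=\langle\mathbb{P}_n,\mathcal{L}\mathbb{P}_m\rangle=0$ for every $m<n$, so that $\mathcal{L}\mathbb{P}_n=\Lambda_n\mathbb{P}_n$ for a constant matrix $\Lambda_n$ that vanishes if and only if $T_n$ does; thus the degree equals $n$ if and only if $\Lambda_n\neq0$, and it remains to exclude $\Lambda_n=0$. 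If $\Lambda_n=0$ then $\mathcal{L}P=0$ for each component $P$ of $\mathbb{P}_n$, and integration by parts (again through (\ref{Neumannc})) yields $0=\int_\Omega P\,div(\rho\Phi\nabla P)\,dxdy=-\int_\Omega\rho\,(\nabla P)^t\Phi\,\nabla P\,dxdy$.

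The main obstacle is this last step: deducing $\nabla P=0$ — impossible since $\deg P=n\ge1$ — from $\int_\Omega\rho\,(\nabla P)^t\Phi\,\nabla P\,dxdy=0$. The difficulty is genuine, because purely algebraically $T_n$ can vanish for an entire degree when the degree-two part of $\Phi$ is parabolic (for instance when it equals $(x,y)^t(x,y)$), even though $\det(D_1,D_2)\neq0$; so the proof must really use that $\{\mathbb{P}_n\}$ is a complete orthogonal family attached to the weight $\rho$, which forces $\Phi$ to be (semi)definite on $\mathrm{supp}\,\rho$. The delicate point is to combine this definiteness with $\det(D_1,D_2)\neq0$ and the Pearson equation so as to make the form $\nabla P\mapsto\int_\Omega\rho\,(\nabla P)^t\Phi\,\nabla P$ non-degenerate modulo constants; on the homogeneous leading part this can be pushed through with Euler's identity $x\partial_x P+y\partial_y P=nP$, and it is exactly here that all the hypotheses are simultaneously needed.
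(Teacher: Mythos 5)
Your first two paragraphs run parallel to the paper: the reduction of the statement to $T_n\neq 0$ via Proposition \ref{prop4} (with $m=0$ and $G_{n,0,n}=I_{n+1}$) is exactly the paper's implicit reduction, and your eigenvalue computation settles the case ${\rm degree}\,\Phi<2$ (where the paper instead uses Sylvester's rank inequality and gets the stronger conclusion that $T_n$ is invertible, though non-vanishing is all the proposition needs) as well as all non-resonant degrees when ${\rm degree}\,\Phi=2$. The genuine gap is the one you flag yourself: the resonant degree $(n-1)\tilde a=-\gamma_1$ when ${\rm degree}\,\Phi=2$. Your proposed repair does not close it, for two reasons. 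First, it requires $\Phi$ to be positive (semi)definite on ${\rm supp}\,\rho$; this is not a hypothesis of the proposition (positive definiteness enters the paper only in Theorem \ref{NN}, which is not used in this proof), and your remark that the existence of the orthogonal family ``forces'' such definiteness is itself an unproved claim. Second, even granting semidefiniteness, passing from $\int_\Omega\rho\,(\nabla P)^t\Phi\,\nabla P\,dxdy=0$ to $\nabla P=0$ is precisely the non-degeneracy statement that has to be established, and the closing appeal to Euler's identity is a sketch of a hoped-for argument, not an argument. A proof attempt that ends by naming the missing lemma is a partial proof.

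The paper closes this case by a different, positivity-free, use of orthogonality, and this is the idea your attempt is missing. From the Pearson equation, the boundary condition (\ref{Neumannc}) and Proposition \ref{prop4} with $m=0$, one first obtains the pairing identity
\begin{equation*}
-\int_{\Omega}\left(\nabla\mathbb{P}_n^t\right)^t\rho\,\Phi\,\nabla\mathbb{P}_j^t\,dxdy=S_nT_n\,\delta_{n,j}.
\end{equation*}
Then, instead of testing the form on $\nabla\mathbb{P}_n$ against itself (your route, which is what creates the definiteness issue), one tests it against a vector of higher degree: expand $\left(\mathbb{P}_{n+2}^t,\;\mathbb{P}_{n+2}^t\right)^t=\sum_{k}\nabla\mathbb{P}_{k+1}^tA_{k+1}$ in the gradient basis, multiply on the left by $\left(\nabla\mathbb{P}_{n+1}^t\right)^t\rho\,\Phi$ and integrate; the identity above collapses the sum to $-S_{n+1}T_{n+1}A_{n+1}$. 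On the other hand the integrand equals $\left(\phi_{11}\partial_x\mathbb{P}_{n+1}+\phi_{12}\partial_y\mathbb{P}_{n+1}+\phi_{21}\partial_x\mathbb{P}_{n+1}+\phi_{22}\partial_y\mathbb{P}_{n+1}\right)\mathbb{P}_{n+2}^t\rho$, whose top term is $C_nX_{n+2}\mathbb{P}_{n+2}^t\rho$ with $C_n\neq0$ because ${\rm degree}\,\Phi=2$; by orthogonality the lower-order terms integrate to zero against $\mathbb{P}_{n+2}$, so the integral equals $C_n\int_\Omega X_{n+2}\mathbb{P}_{n+2}^t\rho\,dxdy$, which is nonzero since the moment matrix is invertible. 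Hence $T_{n+1}\neq0$ for all $n\geq0$, with no positivity of $\Phi$ used anywhere. If you replace your final paragraph by this cross-pairing argument, keeping your reduction and your algebraic treatment of the remaining cases, the proof is complete.
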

	
	\begin{proof} 
		
		For the proof, we are going to consider two cases. 
		In the first case, degree $\Phi$ =2.\\
		Take $M=\mathbb{P}_n$, $m=1$, $A=\rho\Phi$ and $N=\mathbb{P}_j^t$ into (\ref{p2e1}) to obtain
		\begin{equation*}
			div\left[\left(I_2\otimes\mathbb{P}_n\right)\rho\Phi\nabla\mathbb{P}_j^t\right]=\mathbb{P}_ndiv\left(\rho\Phi\nabla\mathbb{P}_j^t\right)+\left(\nabla\mathbb{P}_n^t\right)^t\rho\Phi\nabla\mathbb{P}_j^t
		\end{equation*}
		
		Integrate both sides on $\Omega$ and use the boundary condition (\ref{Neumannc}), (\ref{p2e2}) with $m=0$, $B=\rho\Phi$ and $N=\mathbb{P}_j^t$ as well as the hypothesis $div\left(\rho\Phi\right)=\rho\left(\psi_1,\psi_2\right)$ to obtain
		\begin{eqnarray*}
			-\int_{\Omega}\left(\nabla\mathbb{P}_n^t\right)^t\rho\Phi\nabla\mathbb{P}_j^tdxdy=\int_{\Omega}\mathbb{P}_n\left[\left(\rho\Phi\nabla\right)\cdot\nabla \mathbb{P}_j^t+\rho\left(\psi_1,\psi_2\right)\nabla\mathbb{P}_j^t\right]dxdy.
		\end{eqnarray*}
		Take into account (\ref{e9}) (with $m=0$ and $n=j$), write $X_j^t=\mathbb{P}_j^t+\dots$ and use the orthogonality of the system $\{\mathbb{P}_n^t\}_n$ with respect to $\rho$ to obtain
		
		\begin{eqnarray*}-\int_{\Omega}\left(\nabla\mathbb{P}_n^t\right)^t\rho\Phi\nabla\mathbb{P}_j^tdxdy=\int_{\Omega}\mathbb{P}_n \mathbb{P}_n^tT_n\delta_{n,j}dxdy=S_{n}T_n\delta_{n,j},
		\end{eqnarray*}
		where $\delta_{n,j}$ is the Kronecker symbol. Therefore, multiplying from the left hand side of the expansion 
		\begin{equation*}
			\left(\begin{matrix}
				\mathbb{P}_{n+2}^t\\
				\mathbb{P}_{n+2}^t
			\end{matrix}\right)=\sum_{k=0}^{n+2}\nabla\mathbb{P}_{k+1}^tA_{k+1}
		\end{equation*}
		by $\left(\nabla \mathbb{P}_{n+1}^t\right)^t\rho\Phi$ and integrating both sides on $\Omega$, we obtain
		
		\begin{equation}\label{e10}
			\int_{\Omega}\left(\nabla \mathbb{P}_{n+1}^t\right)^t\rho\Phi\left(\begin{matrix}
				\mathbb{P}_{n+2}^t\\
				\mathbb{P}_{n+2}^t
			\end{matrix}\right)=-S_{n+1}T_{n+1}A_{n+1}.
		\end{equation}
		Since 
		\begin{eqnarray*}
			\left(\nabla \mathbb{P}_{n+1}^t\right)^t\rho\Phi\left(\begin{matrix}
				\mathbb{P}_{n+2}^t\\
				\mathbb{P}_{n+2}^t\end{matrix}\right)&=&\left(\phi_{1,1}\partial_x\mathbb{P}_{n+1}+\phi_{1,2}\partial_y\mathbb{P}_{n+1}+\phi_{2,1}\partial_x\mathbb{P}_{n+1}+\phi_{2,2}\partial_y\mathbb{P}_{n+1}\right)\mathbb{P}_{n+2}^t\\
			&=&C_nX_{n+2}\mathbb{P}^t_{n+2}+\dots,
		\end{eqnarray*}
		and $\Phi$ is of total degree $2$, $C_n$ is a $(n+2,n+3)$-matrix different from $0$. Therefore the left hand side of (\ref{e10}) is different from $0$. Thus $T_{n+1}\neq\,0$.\\
		If degree $\Phi<2$, then 
		\begin{equation*}
			T_n=\left(L_{n-1}\right)^t\left(\left(D_1,D_2\right)\otimes\,I_n\right)N_n.
		\end{equation*}
		Observing that $\left(D_1,D_2\right))\otimes\,I_n\in \mathcal{M}_{2n}(\mathbb{R})$, $N_n\in \mathcal{M}_{2n,n+1}(\mathbb{R})$ and using the Sylvester inequality, we obtain 
		\begin{eqnarray*}
			Rank[\left(D_1,D_2\right))\otimes\,I_n]+Rank(N_n)-2n\leq &Rank[\left(\left(D_1,D_2\right)\otimes\,I_n\right)\,N_n]\\
			\leq &min\left(Rank[\left(D_1,D_2\right))\otimes\,I_n],\,Rank(N_n)\right).
		\end{eqnarray*}
		Moreover, $Rank \left(\left(D_1,D_2\right)\otimes\,I_n\right)= Rank[\left(D_1,D_2\right))]Rank(I_n)$ and 
		$Rank[\left(D_1,D_2\right)]=2$ (for $det\left(D_1,D_2\right)\neq 0$). Therefore $Rank \left(\left(D_1,D_2\right)\otimes\,I_n\right)=2n$ and $Rank[\left(\left(D_1,D_2\right)\otimes\,I_n\right)\,N_n]=n+1$.
		Taking $A=\left(L_{n-1}\right)^t$ and $B=\left(\left(D_1,D_2\right)\otimes\,I_n\right)N_n$ and using again the Sylvester inequality, we have 
		\begin{eqnarray*}
			n+1= Rank(A^t)+Rank(B^t)-(n+1)\leq Rank(B^tA^t)\leq\,min(Rank(B^t),Rank(A^t))=n+1.
		\end{eqnarray*}
		So, $Rank(AB)=n+1$. i.e. $Rank(T_n)=n+1$. Therefore $T_n$ is invertible, for $T_n$ is an $(n+1)$-matrix.
		
	\end{proof}

	We now consider a Hilbert space that will be useful for the proof of the main theorem of this work. For that purpose we consider the weighted Lebesgue space
	\[ L^2(\Omega, \rho) =\left \{ u: \Omega \to \mathbb{R}\,: \text{$u$ measurable and }\; \int_\Omega u^2\rho\, dxdy < + \infty\right\}\]
	equipped with the scalar product	
	$\langle v,u\rangle_{\rho}= \int_{\Omega} uv\rho\,dxdy.$  Clearly the corresponding norm is $\|u\|_{\rho}=\left(\int_{\Omega} u^2\rho\,dxdy\right)^{1\over 2}$.
	
	\begin{theorem}\cite{NN2024}\label{NN} Let $\Omega$ be a simply connected open subset of $\mathbb{R}^2$ and $\rho$ be a weight function on $\Omega$. Let $\Phi$ be a symmetric matrix of $\mathcal{M}_{2}\left(\mathcal{P}_2\right)$, positive definite on $\Omega$ such that $\rho$ is solution to (\ref{peaeq}) under  the Neumann type boundary condition (\ref{Neumannc}). Then $\mathcal{P}$ is dense in $L^2(\Omega,\rho)$.
	\end{theorem}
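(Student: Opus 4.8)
The plan is to recast the density of $\mathcal{P}$ as a completeness statement for the eigenfunctions of a self-adjoint operator, and to concentrate all the analytic work into a single compactness property. On $\mathcal{P}$ introduce the nonnegative symmetric bilinear form
\[
a(u,v)=\int_{\Omega}\left(\nabla u\right)^{t}\Phi\,\nabla v\;\rho\,dxdy,
\]
which is well defined because $\mathcal{P}\subset L^2(\Omega,\rho)$ (finite moments of all orders is a standing assumption) and $\Phi\in\mathcal{M}_2(\mathcal{P}_2)$, and which is nonnegative since $\Phi$ is positive definite on $\Omega$. The role of the Neumann type condition (\ref{Neumannc}), in exactly the form already exploited in Proposition \ref{p2a}, is to guarantee that the associated operator carries no boundary contribution, so that $a$ is the Dirichlet integral of the symmetric operator $u\mapsto-\tfrac1\rho\,div(\rho\,\Phi\nabla u)$ and is therefore closable by standard Dirichlet-form theory. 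First I would pass to the closure of $a$ and call its form domain $W$, namely the completion of $\mathcal{P}$ in the norm $\|u\|_W^2=a(u,u)+\|u\|_\rho^2$; by construction $\mathcal{P}$ is dense in $W$ for $\|\cdot\|_W$, hence, the embedding $W\hookrightarrow L^2(\Omega,\rho)$ being continuous, also dense in $W$ for $\|\cdot\|_\rho$. In particular $W\subset\overline{\mathcal{P}}^{\,\|\cdot\|_\rho}$.

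Next I would invoke the representation theorem for closed nonnegative forms: there is a nonnegative self-adjoint operator $A$ on $L^2(\Omega,\rho)$ with form domain $W$ associated to $a$. The decisive point is to prove that the embedding $W\hookrightarrow L^2(\Omega,\rho)$ is \emph{compact}; granting this, $(A+I)^{-1}$ is a compact, self-adjoint, positive operator on $L^2(\Omega,\rho)$, so by the spectral theorem $L^2(\Omega,\rho)$ admits an orthonormal basis $\{u_k\}_k$ of eigenfunctions of $A$, each of which lies in $W$. Since every $u_k\in W\subset\overline{\mathcal{P}}^{\,\|\cdot\|_\rho}$ and $\{u_k\}$ spans a dense subspace of $L^2(\Omega,\rho)$, we conclude $\overline{\mathcal{P}}^{\,\|\cdot\|_\rho}=L^2(\Omega,\rho)$, which is the assertion. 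I emphasize that this route never requires the eigenfunctions to be polynomials, so it remains valid under the present hypotheses even though no Pearson equation and no orthogonal family are assumed here.

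The hard part will be the compact embedding $W\hookrightarrow L^2(\Omega,\rho)$. I would exhaust $\Omega$ by the sets $\Omega_j=\Omega\cap B(O,j)$ already used in Proposition \ref{p2a}. On each $\Omega_j$ the positive definiteness of $\Phi$ on $\Omega$ yields a lower bound $\Phi\ge c_j I$ with $c_j>0$, so that $a(u,u)\ge c_j\int_{\Omega_j}|\nabla u|^2\rho\,dxdy$, and a weighted Poincar\'e inequality combined with the classical Rellich--Kondrachov theorem gives compactness of the restriction map $W\to L^2(\Omega_j,\rho)$. The genuine difficulty is the uniform control of the tails $\int_{\Omega\setminus\Omega_j}u^2\rho\,dxdy$: one must show that these are small uniformly over $W$-bounded families, by playing the growth of $\Phi$ (degree two and positive definite) against $\rho$ through the form bound, so that the confining effect of the weight renders the part of $\Omega$ outside $\Omega_j$ negligible as $j\to\infty$. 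A standard diagonal/truncation argument would then upgrade the local compactness on each $\Omega_j$ to the global compactness of $W\hookrightarrow L^2(\Omega,\rho)$, completing the proof.
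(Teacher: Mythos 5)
There is no proof in the paper to compare yours against: Theorem \ref{NN} is imported wholesale from the companion work \cite{NN2024}, listed as ``in progress'', and the present paper only invokes its conclusion (in Step 1.3 of the proof of Theorem \ref{th1}). Your proposal therefore has to stand on its own, and it contains a fatal circularity.

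The representation theorem for a closed nonnegative form produces a self-adjoint operator on the closure of the form domain, not on the ambient Hilbert space. Your form domain $W$ is by construction the completion of $\mathcal{P}$, so, viewed inside $L^2(\Omega,\rho)$, its closure is $H_0=\overline{\mathcal{P}}^{\,\|\cdot\|_\rho}$; the associated operator $A$ is self-adjoint as an operator in $H_0$ and is not even densely defined in $L^2(\Omega,\rho)$ unless $H_0=L^2(\Omega,\rho)$ --- which is precisely the statement to be proved. Hence, even granting the compact embedding, the spectral theorem applied to $(A+I)^{-1}$ yields an orthonormal basis of eigenfunctions \emph{of $H_0$}, not of $L^2(\Omega,\rho)$: your final step (``$\{u_k\}_k$ spans a dense subspace of $L^2(\Omega,\rho)$'') silently assumes the theorem. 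Every object in your construction --- the closed form, $A$, its resolvent, the eigenfunctions --- lives inside $\overline{\mathcal{P}}$, so no spectral analysis of these objects can decide whether $\overline{\mathcal{P}}$ exhausts $L^2(\Omega,\rho)$. The natural repair, taking the maximal form domain $\left\{u\in L^2(\Omega,\rho):\int_\Omega(\nabla u)^t\Phi\nabla u\,\rho\,dxdy<\infty\right\}$ so that the form is densely defined, destroys the other half of the argument: the eigenfunctions then have no reason to lie in $\overline{\mathcal{P}}$, and what you would need instead --- that $\mathcal{P}$ is a form core --- is a density statement in a norm stronger than $\|\cdot\|_\rho$, i.e.\ harder than the theorem itself. (Also, your closability argument presumes that $u\mapsto-\frac{1}{\rho}\,div(\rho\Phi\nabla u)$ maps $\mathcal{P}$ into $L^2(\Omega,\rho)$; this involves $(\nabla\rho/\rho)^t\Phi\nabla u$ and is not guaranteed without a Pearson-type hypothesis, which Theorem \ref{NN} does not assume.)

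The analytic programme in your last paragraph also cannot be carried out from the stated hypotheses. Pointwise positive definiteness of $\Phi$ on the open set $\Omega$ does not give $\Phi\geq c_jI$ with $c_j>0$ on $\Omega_j=\Omega\cap B(O,j)$, because $\overline{\Omega_j}$ meets $\partial\Omega$, where $\Phi$ may degenerate --- and in the motivating examples it does: for the product Jacobi weight on $\Omega=(-1,1)^2$, $\Phi=\mathrm{diag}(1-x^2,1-y^2)$ is positive definite on $\Omega$, vanishes in the normal direction on $\partial\Omega$, and $\Omega_j=\Omega$ for $j\geq2$, so the reduction to Rellich--Kondrachov already fails locally. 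More decisively, no tail estimate can be extracted from (\ref{Neumannc}) and positive definiteness alone, because these hypotheses, as literally stated, do not imply the conclusion of the theorem: take $\Omega=\mathbb{R}^2$, $\Phi=I_2$ (symmetric, degree $0\leq2$, positive definite) and $\rho(x,y)=e^{-\sqrt{|x|}-\sqrt{|y|}}$. On $\partial\Omega_j$ one has $\sqrt{|x|}+\sqrt{|y|}\geq\sqrt{j}$, so $\rho$ times any polynomial tends to $0$ uniformly on $\partial\Omega_j$ and (\ref{Neumannc}) holds; yet $e^{-\sqrt{|x|}}$ has an indeterminate moment problem and polynomials are not dense in $L^2$ for this weight (by M.~Riesz's theorem, density would force the measure to be N-extremal, hence discrete), and non-density passes to the product weight in two variables. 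So the quoted result must depend on hypotheses or conventions made precise only in \cite{NN2024}, and any argument --- yours in particular --- that uses only the data stated here cannot succeed.
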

	
	\section{Characterization theorem of classical orthogonal polynomials in two {continuous} variables}
	{ Using the moment functional approach as well as the second Kronecker product, \cite{FPP} proved the orthogonality relation (\ref{or1}) and the structure relation (\ref{FSR1}). The same  approach led to the Rodrigues formula (\ref{e4b}) (see \cite{Miguel2009}). 
		Our theorem below  is based on a weight and is  close to that of \cite[§5]{Al-Salam 1990} for  orthogonal polynomials in one continuous variable. It involves a boundary condition for the weight and {a tensor} product of the matrix $\Phi$. Comparison with previous works is analysed in depth in Remark \ref{GR} and Section 4.} 
	
	\begin{remark}\label{l1}
		Let $D_1=\left(d_{11},\,d_{12}\right)^t$ and $D_2=\left(d_{21},\,d_{22}\right)^t$ be two (2,1)-vectors. By induction on $m$, we obtain the determinant of the  $2m$-matrix $\left(I_m\otimes\,D_1,\;I_m\otimes\,D_2\right)$
		,$m\geq\,1$, as follows
		\begin{equation*}
			det \left(I_m\otimes\,D_1,\;I_m\otimes\,D_2\right)=\left(-1\right)^{\left[m\over 2\right]}\left[det\left(D_1,\; D_2\right)\right]^m.
		\end{equation*}
	\end{remark}
	
	\begin{proposition}\label{l2}
		Polynomials defined by
		
		\begin{subequations}
			\begin{empheq}[left=\empheqlbrace]{align}
				&\psi^{(m)}_1=I_2\otimes\psi^{(m-1)}_1+\nabla\left(\phi_{1,1},\phi_{2,1}\right)\otimes I_{2^{m-1}},\label{te4aa}\\
				&\psi^{(m)}_2=I_2\otimes\psi^{(m-1)}_2+\nabla\left(\phi_{1,2},\phi_{2,2}\right)\otimes I_{2^{m-1}}\label{te4bb}
			\end{empheq}
		\end{subequations}
		can be written as 
		\begin{equation}\label{e15}
			\psi_i^{(m)}(x,y)=\left(I_{2^m}\otimes\,X^t\right)D_i^{(m)}+E_i^{(m)},\; i=1,2,
		\end{equation}
		where
		\begin{align*}
			D_i^{(m+1)}&=H_i^{(m)}+I_2\otimes\,D_i^{(m)},\;\;D_i^{(0)}\equiv\,D_i,\\
			E_i^{(m+1)}&=K_i^{(m)}+I_2\otimes\,E_i^{(m)},\;\;E_i^{(0)}\equiv\,E_i 
		\end{align*} with 
		\begin{align*}
			H_i^{(m)}= \left(\begin{matrix}
				I_{2^{m}}\otimes\,\left(N_{2,1}A_i\right)&I_{2^{m}}\otimes\,\left(N_{2,1}A_{i+1}\right)\\
				I_{2^{m}}\otimes\,\left(N_{2,2}A_i\right)&I_{2^{m}}\otimes\,\left(N_{2,2}A_{i+1}\right)
			\end{matrix}\right),\, K_i^{(m)}=\left(\begin{matrix}
				\left(N_{1,1}B_i\right)I_{2^{m}}&\left(N_{1,1}B_{i+1}\right)I_{2^{m}}\\
				\left(N_{1,2}B_i\right)I_{2^{m}}&\left(N_{1,2}B_{i+1}\right)I_{2^{m}}
			\end{matrix}\right),
		\end{align*}
		$m\geq\,0.$ 
		
	\end{proposition}
	\begin{proof}
		We show by induction on $m$ that (\ref{e15}) is
		satisfied for $m\geq 0$. It is obvious that (\ref{e15}) is satisfied for $m=0$. Assume that they are satisfied up to a fixed integer $m> 0$.
		We use the relation (\ref{te4aa}) to obtain
		\begin{eqnarray*}
			\psi_1^{(m+1)}&=&\left(\begin{matrix}
				\partial_x\phi_{1,1}I_{2^{m}}+\psi_{1}^{(m)}&\partial_x\phi_{2,1}I_{2^{m}}\\
				\partial_y\phi_{1,1}I_{2^{m}}&\partial_y\phi_{2,1}I_{2^m}+\psi_{1}^{(m)}
			\end{matrix}\right).
		\end{eqnarray*}
		Since $\phi_{1,1}$ can be written as
		$\phi_{1,1}(x,y)=X_2^tA_1+X^tB_1+c_1$, using (\ref{pd}), the action of $\partial_x$ on $\phi_{1,1}$ is $\partial_x\,\phi_{1,1}(x,y)=X^t\left(N_{21}A_1\right)+N_{11}B_1$. Observing that $X^t\left(N_{21}A_1\right)I_{2^m}=\left(I_{2^m}\otimes\,X^t\right)\left(I_{2^m}\otimes\,\left(N_{21}A_1\right)\right)$ and using the fact that (\ref{e15}) is satisfied at order $m$ we obtain
		\begin{equation*}
			\partial_x\phi_{1,1}(x,y)I_{2^m}+\psi_1^{(m)}(x,y)=\left(I_{2^m}\otimes\,X^t\right)\left[I_{2^m}\otimes\,\left(N_{21}A_1\right)+D_1^{(m)}\right]+\left(N_{11}B_1\right)I_{2^m}+E_1^{(m)}.
		\end{equation*}
		In a similar way, we write $\partial_y\phi_{1,1}(x,y)I_{2^m}$, $\partial_x\phi_{2,1}(x,y)I_{2^m}$ and $\partial_y\phi_{2,1}(x,y)I_{2^m}+\psi_1^{(m)}$ in terms of $I_{2^m}\otimes X^t$ and take into account into the expression of $\psi_1^{(m+1)}$ given previously to get 
		\begin{equation*}
			\psi_1^{(m+1)}(x,y)=\left(I_{2^{m+1}}\otimes\,X^t\right)D_1^{(m+1)}+E_1^{(m+1)},
		\end{equation*}
		where
		\begin{align*}
			D_1^{(m+1)}&=\left( \begin{matrix}
				I_{2^m}\otimes\,\left(N_{2,1}A_1\right)+D_1^{(m)}&I_{2^m}\otimes\,\left(N_{2,1}A_2\right)\\
				I_{2^m}\otimes\,\left(N_{2,2}A_1\right)&I_{2^m}\otimes\,\left(N_{2,2}A_2\right)+D_1^{(m)}
			\end{matrix}\right)=H_1^{(m)}+I_2\otimes\,D_1^{(m)},\\
			E_1^{(m+1)}&=\left( \begin{matrix}
				I_{2^m}\otimes\,\left(N_{1,1}B_1\right)+E_1^{(m)}&I_{2^m}\otimes\,\left(N_{1,1}B_2\right)\\
				I_{2^m}\otimes\,\left(N_{1,2}B_1\right)&I_{2^m}\otimes\,\left(N_{1,2}B_2\right)+{E}_1^{(m)}
			\end{matrix}\right)=K_1^{(m)}+I_2\otimes\,E_1^{(m)}.
		\end{align*}
		In a similar way, we obtain  $\psi_2^{(m+1)}(x,y)$. 
	\end{proof}
	\begin{proposition}\label{l3} 
		Let $\left\{\mathbb{Q}_{n,m}\right\}_n$ be a family of vector polynomials, orthogonal with respect to the weight function $\rho_m=\rho\Phi^{\otimes\,m}$, satisfying the Pearson equation $div\left(\rho_m\otimes\,\Phi\right)=\rho_m\left(\psi_1^{(m)},\psi_2^{(m)}\right)$ with  the Neumann boundary condition (\ref{Neumannc}) where $\Phi\in\mathcal{M}_2\left(\mathcal{P}\right)$ and $\psi_i^{(m)}(X)=X^tD_i^{(m)}+E_i^{(m)}$ , $i=1,\,2$,  are polynomials given by (\ref{te4aa})-(\ref{te4bb})  with $det(D_1,\,D_2)\neq 0$  then
		\begin{equation}\label{e14}
			{\rm degree}\left(\left(\left(\Phi\nabla\right)\cdot\nabla\right)\,\mathbb{Q}_{n,m}+\psi_1^{(m)}\partial_x\mathbb{Q}_{n,m}+\psi_2^{(m)}\partial_y\mathbb{Q}_{n,m}\right)=n,\;\;\forall n,\; n\geq 1.
		\end{equation}
	\end{proposition}
	\begin{proof}
		If $degree(\Phi)=2$, follow the first part of the proof of Proposition \ref{prop3} to obtain (\ref{e14}). 
		If  {$degree(\Phi)< 2$}, use the Proposition \ref{l2} to have $D_i^{(m)}=I_2\otimes\,D_i^{(m-1)}$, $i=1,2$ and iterate to obtain 
		$D_i^{(m)}=I_{2^m}\otimes\,D_i$, $i=1,2$. Next taking into account Remark \ref{l1} yields $det\left(D_1^{(m)},D_2^{(m)}\right)=\left(-1\right)^{\left[2^{m-1}\right]}\left[det\left(D_1,\; D_2\right)\right]^{2^m}\neq\,0$, for $det\left(D_1,\,D_2\right)\neq\,0$. Finally follow the method described in the second part of the proof of the Proposition \ref{prop3}, to get $Rank\left(T_n^{(m)}\right)=2^m(n+1)$. Thus (\ref{e14}) follows.  
	\end{proof}

	\begin{theorem}\label{th1}\hspace{1 cm}\\
		Let $\Omega$ be a simply connected open subset of $\mathbb{R}^2$, $\rho$ a weight function on $\Omega$ and $\varPhi\in\mathcal{M}_{2}\left(\mathcal{P}_2\right)$ a symmetric and {positive definite} matrix 
		$\varPhi=\left(\begin{matrix}
			\phi_{1,1}&\phi_{1,2}\\
			\phi_{2,1}&\phi_{2,2}
		\end{matrix}\right)$ such that the Neumann type boundary  condition   
		\begin{equation}\label{bc}
			\lim\limits_{j\rightarrow\infty}1_{\partial\Omega_j}\left(\rho \Phi\,\nabla\,u\right)\cdot\overrightarrow{n_j}=0,\; \Omega_j=\Omega\cap B(O,j),\; {\rm for~all}\; u\in\mathcal{M}_{1,n}\left(\mathcal{P}\right)
		\end{equation} is satisfied  and 
		\begin{subequations}
			\begin{empheq}[left=\empheqlbrace]{align}
				\phi_{1,1}\partial_x\varPhi+\phi_{2,1}\partial_y\varPhi=\varPhi\nabla\left(\phi_{1,1},\phi_{2,1}\right),\label{te1a}\\
				\phi_{1,2}\partial_x\varPhi+\phi_{2,2}\partial_y\varPhi=\varPhi\nabla\left(\phi_{1,2},\phi_{2,2}\right).\label{te1b}
			\end{empheq}
		\end{subequations}

		Let $\left\{\mathbb{P}_n\right\}_n$, $\mathbb{P}_n\in \mathcal{M}_{n+1,1}\left(\mathcal{P}\right)$ be the family of vector polynomials, orthogonal with respect to the weight $\rho$  and $\left\{\mathbb{Q}_{n,m}\right\}_n$, $\mathbb{Q}_{n,m}\in\mathcal{M}_{2^{m},n+1}\left(\mathcal{P}\right)$, the family of polynomials of degree $n$ 
		\[\mathbb{Q}_{n,m}=\nabla\left(\mathbb{Q}_{n+1,m-1}\right),\;\;\;\mathbb{Q}_{n,0}=\mathbb{P}_n^t.\] The following properties are equivalent:
		\begin{enumerate}
			\item[(a)] There exist two polynomials $\psi_i(x,y)=X^tD_i+E_i,\,i=1,2$, such that $det\left(D_1,\,D_2\right)\neq\,0$ and
			\begin{equation}\label{te2a}
				div(\rho\varPhi)=\rho\left(\psi_{1},\,\psi_{2}\right).
			\end{equation}
			
			\item[(b)] There exist two polynomials  $\psi_i(x,y)=X^tD_i+E_i,\,i=1,2$, with  $det\left(D_1,\,D_2\right)\neq\,0$ such that for all positive integers $m$, $\left\{\mathbb{Q}_{n,m}\right\}_n$ is orthogonal with respect to $\rho_m=\rho\Phi^{\otimes\,m}$ and  
			\begin{equation}\label{te3}
				div(\rho_m\otimes \varPhi)=\rho_m\left(\psi_1^{(m)},\psi^{(m)}_2\right),
			\end{equation}
			where $\psi_i^{(m)},\,i=1,2$, are $2^m$-matrix polynomials defined by
			
			\begin{subequations}
				\begin{empheq}[left=\empheqlbrace]{align}
					&\psi^{(m)}_1=I_2\otimes\psi^{(m-1)}_1+\nabla\left(\phi_{1,1},\phi_{2,1}\right)\otimes I_{2^{m-1}},\label{te4a}\\
					&\psi^{(m)}_2=I_2\otimes\psi^{(m-1)}_2+\nabla\left(\phi_{1,2},\phi_{2,2}\right)\otimes I_{2^{m-1}}\label{te4b}
				\end{empheq}
			\end{subequations}
			
			with $\psi_1^{(0)}=\psi_1$ and $\psi_2^{(0)}=\psi_2$.
			\item[(c)] For fixed integers $n\geq 0$ and $m\geq 0$, $\mathbb{Q}_{n,m}$ satisfies the second order partial differential equation
			\begin{equation}\label{te2b}
				\left(\left(\varPhi\nabla\right)\cdot\nabla\right)\mathbb{Q}_{n,m}+\psi_{1}^{(m)}\partial_x\mathbb{Q}_{n,m}+\psi_{2}^{(m)}\partial_y\mathbb{Q}_{n,m}+\mathbb{Q}_{n,m}\Lambda_{n+m,m}=0,
			\end{equation}
			where the matrix polynomials $\psi_{1}^{(m)}$, $\psi_{2}^{(m)}$ are defined in (\ref{te4a}) and (\ref{te4b}),  $\Lambda_{n+m,m}$ is the $n+m+1$-matrix defined by 
			{\small	\begin{align*}
					\lefteqn{G_{n,m,n}\Lambda_{n+m,m}}\\
					&=-\left[\left(L_{n-1}^{(m)*}\right)^t\left(\left(A_1,\,2A_2,\,A_3\right)\otimes\,I_{2^m(n-1)}\right)N_n^{(m)*}+\left(I_{2^m}\otimes L_{n-1}^t\right)\left(\left(D_1^{(m)},\,D_2^{(m)}\right)\otimes\,I_n\right)N_n^{(m)}\right]\\
					&\times G_{n,m,n},
			\end{align*}}
			where	$G_{n,m,n}$ is the leading coefficient of $\mathbb{Q}_{n,m}$  given in (\ref{e8a}) and $\Lambda_{n,0}\equiv\Lambda_n$.
			\item[(d)] For an integer $n\geq 1$ the following Rodrigues formula holds 
			\begin{eqnarray}\label{Rodrigues}
				\mathbb{P}^t_n=\frac{(-1)^n}{\rho}div^{(n)}\left[\rho\varPhi^{\otimes\,n}\right]R_n,
			\end{eqnarray}
			where
			$R_n=\left(\nabla^{(n)}\mathbb{P}_n^t\right)\prod_{j=0}^{n-1}\left(\Lambda_{n,j}\right)^{-1}$ and   $\Lambda_{n,j},\,j=0..n-1$ are the matrices in (\ref{te2b}).
			\item[(e)] There exist $A^{n,m}_{n+1}$, $A^{n,m}_{n}$ and $A^{n,m}_{n-1}$, where $A^{n,m}_{p}=\left(\begin{matrix}
				A^{n,m}_{p,1}&\\
				A^{n,m}_{p,2}	
			\end{matrix}\right)$, with $A^{n,m}_{p,i}\in \mathcal{M}_{p+1,\,n+1}\left(\mathbb{R}\right)$, $i=1$ ,$2$ and $p=n-1\,, n\, ,n+1$ such that
			\begin{equation}\label{SR}
				\left(\varPhi\otimes\,I_{2^m}\right) \mathbb{Q}_{n-1,m+1}=(I_2\otimes\mathbb{Q}_{n+1,m})A^{n,m}_{n+1}
				+(I_2\otimes\mathbb{Q}_{n,m})A^{n,m}_{n}+(I_2\otimes\mathbb{Q}_{n-1,m})A_{n-1}^{n,m}
			\end{equation}
			and $A^{n,m}_{n-1}$ invertible.
		\end{enumerate}
	\end{theorem}
	
	\begin{proof}
		We organize the proof in the following scheme:\newline
		\begin{enumerate}
			\item[Step 1] $(a)\Rightarrow\,(b)\Rightarrow\,(c)\Rightarrow (a)$ which is equivalent to $(a)\Leftrightarrow\,(b)\Leftrightarrow\,(c)$.
			\item [Step 2] $(b)$ and $(c) \Rightarrow\,(d)\Rightarrow\,(a)$ which, taking into account Step 1, is equivalent to $(d)\Leftrightarrow (a)$.
			\item [Step 3] $(b)\Rightarrow\,(e)\Rightarrow\,(a)$ which  taking into account Step 1 is equivalent to $(e)\Leftrightarrow\,(a)$.
		\end{enumerate}
		\begin{enumerate}
			\item[Step 1] $(a)\Rightarrow\,(b)\Rightarrow\,(c)\Rightarrow (a)$, which is equivalent to $(a)\Leftrightarrow\,(b)\Leftrightarrow\,(c)$.
			
			\vspace{0.2cm} [Step 1.1] $(a)\Rightarrow\,(b)$ \\
			We assume that property (a) is satisfied, and we show by induction on $m$ that (b) is
			satisfied for $m\geq 0$. \\
			Let us show by induction on $m$ that (\ref{te3}) is
			satisfied for $m\geq 0$.   Assume that (\ref{te3}) is satisfied for a fixed integer $m\geq\,1$. Let us first observe from the associativity of $\otimes$ that 
			\begin{equation}\label{te5}
				\rho_{m}\otimes \varPhi=\rho\varPhi\otimes\varPhi^{\otimes\,m}=\left(\begin{matrix}
					\rho\phi_{1,1}\varPhi^{\otimes\,m}&\rho\phi_{1,2}\varPhi^{\otimes\,m}\\
					\rho\phi_{2,1}\varPhi^{\otimes\,m}&\rho\phi_{2,2}\varPhi^{\otimes\,m}
				\end{matrix}\right).  
			\end{equation} 
			Therefore, the induction hypothesis $div(\rho_m\otimes \varPhi)=\rho_m(\psi_1^{(m)},\,\psi_2^{(m)})$ is equivalent to 
			\begin{subequations}
				\begin{empheq}[left=\empheqlbrace]{align}
					&\partial_x\left(\rho\phi_{1,1}\varPhi^{\otimes\,m}\right)+\partial_y\left(\rho\phi_{2,1}\varPhi^{\otimes\,m}\right)=\rho_m\psi_1^{(m)},\label{te7a}\\
					&\partial_x\left(\rho\phi_{1,2}\varPhi^{\otimes\,m}\right)+\partial_y\left(\rho\phi_{2,2}\varPhi^{\otimes\,m}\right)=\rho_m\psi_2^{(m)}\label{te7b}.
				\end{empheq}
			\end{subequations}
			Replacing $m$ by $m+1$ in (\ref{te5}) and applying the divergence operator on both sides we obtain
			\begin{equation*}
				div(\rho_{m+1}\otimes \Phi)=\left(\partial_x\left(\rho\phi_{1,1}\Phi^{\otimes\,m+1}\right)+\partial_y\left(\rho\phi_{2,1}\Phi^{\otimes\,m+1}\right),\partial_x\left(\rho\phi_{1,2}\Phi^{\otimes\,m+1}\right)+\partial_y\left(\rho\phi_{2,2}\varPhi^{\otimes\,m+1}\right)\right).
			\end{equation*}
			Observing that for a polynomial $p\in\mathcal{P}$, $p\Phi^{\otimes\,m+1}=\Phi\otimes p\varPhi^{\otimes\,m}$, taking  $p=\phi_{1,1}$ and using the product rule (\ref{p1e1a}) with $A=\varPhi$ and $B=\rho\phi_{1,1}\Phi^{\otimes\,m}$, we have $\partial_x\left(\rho\phi_{1,1}\varPhi^{\otimes\,m+1}\right)=\varPhi\otimes\partial_x\left(\rho\phi_{1,1}\Phi^{\otimes\,m}\right)+\rho\phi_{1,1}\partial_x\varPhi\otimes \Phi^{\otimes\,m}.$ In a similar way, we have
			$\partial_y\left(\rho\phi_{2,1}\Phi^{\otimes\,m+1}\right)=\Phi\otimes{\partial_y}\left(\rho\phi_{2,1}\Phi^{\otimes\,m}\right)+\rho\phi_{2,1}{\partial_y}\varPhi\otimes \Phi^{\otimes m}.$ Adding both and using  (\ref{te1a}) and  ( \ref{te7a}), we obtain 
			\begin{eqnarray*}
				\partial_x\left(\rho\phi_{1,1}\varPhi^{\otimes\,m+1}\right)+\partial_y\left(\rho\phi_{2,1}\Phi^{\otimes\,m+1}\right)
				=\Phi\otimes\left(\rho\Phi^{\otimes m}\psi_1^{(m)}\right)+\left(\Phi\nabla(\phi_{1,1},\phi_{2,1})\right)\otimes \rho\Phi^{\otimes m}
			\end{eqnarray*}
			which can be rewritten  as   
			\begin{eqnarray*}
				\partial_x\left(\rho\phi_{1,1}\Phi^{\otimes\,m+1}\right)+\partial_y\left(\rho{\phi_{2,1}}\varPhi^{\otimes\,m+1}\right)
				=\left(\Phi\,I_2\right)\otimes\left(\rho\Phi^{\otimes m}\psi_1^{(m)}\right)+\left(\Phi\nabla(\phi_{1,1},\phi_{2,1})\right)\otimes\left( \rho\Phi^{\otimes m}I_{2^m}\right)
			\end{eqnarray*}
			since $\Phi$ and $\Phi^{\otimes m}$ are  $2$ and $2^m$ matrices. Taking consecutively  $\left(A,B,C,D\right)=\left(\Phi,\,\rho\Phi^{\otimes m},\,I_2,\psi_1^{(m)}\right)$  and $\left(A,B,C,D\right)=\left(\Phi,\,\rho\Phi^{\otimes m},\,\nabla\left(\phi_{1,1},\,\phi_{2,1}\right),\,I_{2^m}\right)$ in  (\ref{p1e2}), we have 
			\begin{eqnarray*}
				\left(\Phi\,I_2\right)\otimes\left(\rho\Phi^{\otimes m}\psi_1^{(m)}\right)&=&
				\left(\Phi\otimes\rho\Phi^{\otimes m}\right)\left(I_2\otimes\psi_1^{(m)}\right)\\ \left(\Phi\nabla(\phi_{1,1},\phi_{2,1})\right)\otimes\left( \rho\Phi^{\otimes m}I_{2^m}\right)&=&\left(\varPhi\otimes \rho\Phi^{\otimes m}\right)\left(\nabla(\phi_{1,1},\phi_{2,1})\otimes I_{2^m}\right).
			\end{eqnarray*}
			Therefore 
			\begin{eqnarray*}
				\partial_x\left(\rho\phi_{1,1}\varPhi^{\otimes\,m+1}\right)+\partial_y\left(\rho\phi_{2,1}\varPhi^{\otimes\,m+1}\right)
				&=&\rho_{m+1}\left(I_2\otimes\psi_1^{(m)}+\nabla(\phi_{1,1},\phi_{2,1})\otimes I_{2^m}\right),\\
				&=&\rho_{m+1}\psi_1^{(m+1)}.
			\end{eqnarray*}
			In a similar way, one proves that 
			\begin{eqnarray*}
				\partial_x\left(\rho\phi_{1,2}\Phi^{\otimes\,m+1}\right)+\partial_y\left(\rho\phi_{2,2}\Phi^{\otimes\,m+1}\right)=\rho_{m+1}\psi_2^{(m+1)}.
			\end{eqnarray*}
			So, $div(\rho_{m+1}\otimes\varPhi)=\rho_{m+1}\left(\psi_1^{(m+1)},\,\psi_2^{(m+1)}\right).$\\
			Let us prove that for a fixed $m\geq 1$ the family of polynomials 
			$\left\{Q_{n,m}\right\}_n$ is orthogonal with respect to $\rho_m=\rho \Phi^{\otimes m}$.
			Taking $M=\mathbb{Q}_{n+1,m-1}^t$, $N=\mathbb{Q}_{j+1,m-1}$ in (\ref{p2e1}), replacing $A^{\otimes\,m }$ by $\rho\Phi^{\otimes\,m }$ and taking into account the definition of $\rho_{m}$, we have
			\begin{eqnarray*}
				\lefteqn{div\left[\left(I_2\otimes\,\mathbb{Q}_{n+1,m-1}^t\right)\rho\Phi^{\otimes\,m}\nabla\,\mathbb{Q}_{j+1,m-1}\right]}&\\
				&=\mathbb{Q}_{n+1,m-1}^tdiv(\rho_{m-1}\otimes\Phi\nabla \mathbb{Q}_{j+1,m-1})+\left(\partial_x\mathbb{Q}_{n+1,m-1}^t,{\partial_y}\mathbb{Q}_{n+1,m-1}^t\right)\rho_m\nabla \mathbb{Q}_{j+1,m-1}
			\end{eqnarray*}
			
			Integrating both sides on $\Omega$ we obtain
			\begin{eqnarray*}
				\int_{\Omega}\left(\nabla\mathbb{Q}_{n+1,m-1}^t\right)^t\rho_m\nabla\mathbb{Q}_{j+1,m-1}dxdy
				&=&\int_{\Omega}div\left[\left(I_2\otimes\,\mathbb{Q}_{n+1,m-1}^t\right)\rho\Phi^{\otimes\,m}\nabla\,\mathbb{Q}_{j,m-1}\right]dxdy\\
				&&-\int_{\Omega}\mathbb{Q}_{n+1,m-1}div(\left(\rho_{m-1}\otimes\Phi\right)\nabla \mathbb{Q}_{j+1,m-1})dxdy.
			\end{eqnarray*}
			Taking into account the boundary condition (\ref{bc}) as well as the second item of Proposition \ref{p2a}, we obtain
			\begin{eqnarray*}
				\int_{\Omega}\left(\nabla\mathbb{Q}_{n+1,m-1}^t\right)^t\rho_m\mathbb{Q}_{j+1,m-1}dxdy=-\int_{\Omega}\mathbb{Q}_{n+1,m-1}^tdiv(\left(\rho_{m-1}\otimes\Phi\right)\nabla \mathbb{Q}_{j,m-1})dxdy.
			\end{eqnarray*}
			Using (\ref{p2e2}) with $B=\Phi$, $N=\mathbb{Q}_{j+1,m-1}$, replacing $m$ by $m-1$ and $\,A^{\otimes\,m-1}$ by $\rho\Phi^{\otimes\,m-1}$ and taking into account the fact that $div\left(\rho_{m-1}\otimes \Phi\right)=\rho_{m-1}\left(\psi_1^{(m-1)},\,{\psi_2^{(m-1)}}\right)$ we obtain
			\begin{align*}
				\lefteqn{\int_{\Omega}\left(\nabla\mathbb{Q}_{n+1,m-1}^t\right)^t\rho_m\nabla\mathbb{Q}_{j+1,m-1}dxdy}&\\
				&=\int_{\Omega}\mathbb{Q}_{n+1,m-1}^t	\rho_{m-1}\left[\left[\left(\left(\Phi\nabla\right)\cdot\nabla\right)\right]\mathbb{Q}_{j+1,m-1}+\left(\psi_1^{(m-1)},\,\psi_2^{(m-1)}\right)\nabla\mathbb{Q}_{j+1,m-1}\,\right]dxdy\\
				&=H_{n+1}\delta_{n,j}.
			\end{align*}
			
			Here $H_{n+1}$ is an $(n+1, n+1)$-invertible matrix because the family $\left\{\mathbb{Q}_{n+1,m-1}\right\}$ is orthogonal with respect to $\rho_{m-1}$ and  the matrix polynomial $\left[\left[\left(\left(\Phi\nabla\right)\cdot\nabla\right)\right]\mathbb{Q}_{j+1,m-1}+\left(\psi_1^{(m-1)},\,\psi_2^{(m-1)}\right)\nabla\mathbb{Q}_{j+1,m-1}\,\right]$ is of total degree $j+1$, thanks to Proposition \ref{l3}.
			
			\vspace{0.1cm}[Step 1.2] $(b)\Rightarrow\,(c)$   \\
			We assume $(b)$ and fix two non-negative integers $n$ and $m$. Multiplying the expansion 
			
			\begin{equation*}
				\left(\left(\Phi\nabla\right)\cdot\nabla\right)\mathbb{Q}_{n,m}+\left(\psi_{1}^{(m)},\,\psi_{2}^{(m)}\right)\nabla \mathbb{Q}_{n,m}=\sum_{j=0}^{n}\mathbb{Q}_{j,m}A_{j+m,m}
			\end{equation*}
			from the left side by $\rho_{m}$,  and using (\ref{te3}) and then (\ref{p2e2}) with $A^{\otimes\,m}=\rho \Phi^{\otimes\,m}$, $B=\Phi$ and $N=\mathbb{Q}_{n,m}$, we obtain
			\begin{equation*}
				div\left[\left(\rho_{m}\otimes\Phi\right)\nabla\mathbb{Q}_{n,m}\right]=\sum_{j=0}^{n}\rho_{m}\mathbb{Q}_{j,m}A_{j+m,m}.
			\end{equation*}
			Multiplying from the left side by ${\mathbb{Q}_{k,m}^t}$, $0\leq\,k\leq\,n$ and taking into account (\ref{p2e1}) with $M={\mathbb{Q}_{k,m}^t}$, $A^{\otimes\,m+1}=\rho\Phi^{\otimes\,m+1}=\rho_{m+1}$ and $N=\mathbb{Q}_{n,m}$, we obtain
			\begin{equation*} div\left[\left(I_2\otimes\mathbb{Q}_{k,m}^t\right)\rho_{m+1}\nabla\mathbb{Q}_{n,m}\right]-\left(\nabla\mathbb{Q}_{k,m}\right)^t\rho_{m+1}\nabla\mathbb{Q}_{n,m}=\sum_{j=0}^{n}\mathbb{Q}_{k,m}^t\rho_{m}\mathbb{Q}_{j,m}A_{j+m,m}.
			\end{equation*}
			
			Integrating both sides on $\Omega$ and taking into account the boundary condition (\ref{bc}) as well as the Proposition \ref{p2a}, we obtain 
			
			\begin{equation*}
				\int_{\Omega}\mathbb{Q}_{k,m}^t\rho_{m}\mathbb{Q}_{k,m}dxdyA_{k+m,m}=-\int_{\Omega}\left(\nabla\mathbb{Q}_{k,m}\right)^t\rho_{m+1}\nabla\mathbb{Q}_{n,m}dxdy.
			\end{equation*}
			Since the family $\left\{\nabla\mathbb{Q}_{n,m}\right\}_{n\geq\,1}$ is orthogonal with respect to $\rho_{m+1}$, $A_{k+m,m}=0$, $k<n$. Therefore,
			
			\begin{equation*}
				\left[\left(\Phi\nabla\right)\cdot\nabla\right]\mathbb{Q}_{n,m}+\left(\psi_{1}^{(m)},\,\psi_{2}^{(m)}\right)\nabla\mathbb{Q}_{n,m}=\mathbb{Q}_{n,m}A_{n+m,m}.
			\end{equation*}
			Write $\mathbb{Q}_{n,m}=\left(I_{2^m}\otimes\,X_n^t\right)G_{n,m,n}$+lower terms and identify both sides using the relation (\ref{e9}) to obtain $G_{n,m,n}\Lambda_{n+m,m}=-T_n^{(m)}G_{n,m,n}$.
			
			\vspace{0.1cm}[Step~1.3] $(c)\Rightarrow\,(a)$ \\
			Taking $m=0$ into  (\ref{te2b}) and multiplying by $\rho$ we have
			\begin{equation*}
				\left[\left(\rho\Phi\nabla\right)\cdot\nabla\right]\mathbb{P}_n^t+\rho\left(\psi_{1},\psi_{2}\right)\nabla\mathbb{P}_n^t+\rho\mathbb{P}_n^t\Lambda_{n,0}=0.
			\end{equation*}
			Taking into account the relation (\ref{p2e2}) with $m=0$, $B=\rho\Phi$ and $N=\mathbb{P}_n^t$ we obtain
			\begin{equation*}
				div\left(\rho\Phi\nabla\mathbb{P}_n^t\right)+\left[\rho\left(\psi_{1},\,\psi_{2}\right)-div\left(\rho\Phi\right)\right] \nabla\mathbb{P}_n^t+\rho\mathbb{P}_n^t\Lambda_{n,0}=0.
			\end{equation*}
			Integrating both sides on the domain $\Omega$ and using the boundary condition (\ref{bc}), Proposition \ref{p2a} as well as the orthogonality of the family  $\left\{\mathbb{P}_n^t\right\}_n$ with respect to $\rho$ ,we obtain
			\begin{equation*}
				\int_{\Omega}\left[\rho\left(\psi_{1},\,\psi_{2}\right)-div\left(\rho\Phi\right)\right] \nabla\mathbb{P}_n^tdxdy=0.
			\end{equation*}
			
			Since $\{\nabla\mathbb{P}_n^t\}_{n\geq\,1}$ is a basis of $\mathcal{P}\times\,\mathcal{P}$, we get
			
			\begin{equation*}
				\int_{\Omega}\left[\rho\left(\psi_{1},\,\psi_{2}\right)-div\left(\rho\Phi\right)\right] \mathbb{P}dxdy=0, \; \text{for all } \mathbb{P}=(p,q)^t\in\mathcal{P}\times\mathcal{P}.
			\end{equation*}
			Taking $\mathbb{P}=(p,0)^t$ and using the fact that $div(\rho\Phi)=\left(\partial_x\left(\rho\phi_{1,1}\right)+\partial_y\left(\rho\phi_{2,1}\right),\partial_x\left(\rho\phi_{1,2}\right)+\partial_y\left(\rho\phi_{2,2}\right)\right),$ we obtain
			\begin{equation*}
				\int_{\Omega}{\left(\rho\psi_{1}-\partial_x\left(\rho\phi_{1,1}\right)-\partial_y\left(\rho\phi_{2,1}\right)\right)\rho^{-1}}p\rho dxdy=0, p\in\mathcal{P}. 
			\end{equation*}
			{That is
				\begin{equation}\label{tpeq}
					\int_{\Omega}{\left(\rho\psi_{1}-\partial_x\left(\rho\phi_{1,1}\right)-\partial_y\left(\rho\phi_{2,1}\right)\right)\rho^{-1}}u\rho dxdy=0, \;u\in L^2\left(\Omega,\rho\right),
				\end{equation}by Theorem \ref{NN}. Therefore, from the Hölder inequality, we obtain 
				\begin{equation*}
					\left\lvert\int_{\Omega}\tfrac{\partial_x(\rho\phi_{1,1})+\partial_y(\rho\phi_{2,1})}{\rho}u\rho\,dx\right\rvert=\left\lvert\int_{\Omega}\psi_1u\rho\,dx\right\rvert\leq\,C\lVert u\rVert_{L^2\left(\Omega,\,\rho\right)},u\in L^2\left(\Omega,\,\rho\right),
				\end{equation*}where $C=\lVert\psi_1\rVert_{L^2\left(\Omega,\,\rho\right)}$. So, the linear functional $L(u)=\int_{\Omega}\tfrac{\partial_x(\rho\phi_{1,1})+\partial_y(\rho\phi_{2,1})}{\rho}u\rho\,dx,\,u\in L^2\left(\Omega,\,\rho\right)$ is continuous. Hence, by the Riesz representation theorem, $\tfrac{\partial_x(\rho\phi_{1,1})+\partial_y(\rho\phi_{2,1})}{\rho}\in L^2\left(\Omega,\,\rho\right)$.}
			Therefore $$\left(\partial_x\left(\rho\phi_{1,1}\right)+\partial_y\left(\rho\phi_{2,1}\right)-\rho\psi_{1}\right)\rho^{-1}\in L^2(\Omega,\rho).$$ {Combining with (\ref{tpeq}), we obtain} $\partial_x\left(\rho\phi_{1,1}\right)+\partial_y\left(\rho\phi_{2,1}\right)=\rho\psi_{1}$. In a similar way, taking $\mathbb{P}=(0,q),\,q\in\mathcal{P}$, we obtain $\partial_x\left(\rho\phi_{1,2}\right)+\partial_y\left(\rho\phi_{2,2}\right)=\rho\psi_{2}$. Thus $div\left(\rho{\Phi}\right)=\rho\left(\psi_{1},\,\psi_{2}\right)$.
			\item [Step 2] $(b) \text{ and } (c)\Rightarrow\,(d)\Rightarrow\,(a)$ which, taking into account Step 1, is equivalent to $(d)\Leftrightarrow (a)$.\\\,   [Step 2.1]	$(b)$ and $(c) \Rightarrow\,(d)$
			From (\ref{te2b}),
			\begin{equation*}
				\left[\left(\Phi\nabla\right)\cdot\nabla\right]\nabla^{(m)}\mathbb{P}_{n+m}^t+\left(\psi_{1}^{(m)},\psi_{2}^{(m)}\right)\nabla\nabla^{(m)}\mathbb{P}_{n+m}^t+\nabla^{(m)}\mathbb{P}_{n+m}^t\Lambda_{n+m,m}=0.
			\end{equation*}
			Replace $n$ by $n-m$, multiply the obtained equation by $\rho_{m}$ from the left side and use (\ref{p2e2}), with $A^{\otimes m}=\rho_{m}$, $B=\Phi$ and $N=\mathbb{P}_{n}^t$, to obtain
			
			\begin{equation*}
				div\left(\left(\rho_{m}\otimes\Phi\right)\nabla^{(m+1)}\mathbb{P}_n^t\right)+\rho_{m}\nabla^{(m)}\mathbb{P}_n^t\Lambda_{n,m}=0.
			\end{equation*}
			Replace $m$ by $m-1$ and  iterate the resulting relation to obtain
			\begin{equation*}
				div^{(m)}\left(\rho_{m}\nabla^{(m)}\mathbb{P}_n^t\right)=(-1)^n\rho\mathbb{P}_n^t\Lambda_{n,m-1}\Lambda_{n,m-2}\dots\,\Lambda_{n,0}.
			\end{equation*}
			Take $m=n$ to get the result.\\
			\,[Step~2.2] $(d)\Rightarrow\,(a)$ Take  $n=1$ in  (d) to obtain
			\begin{equation*}
				div(\rho\Phi)\nabla\,\mathbb{P}_1^t=-\rho\mathbb{P}_1^t\Lambda_{1,0}.
			\end{equation*}
			Since $\mathbb{P}_1$ is monic, ${\mathbb{P}}_1^t(x,y)=X^t+G_{1,0,0}=(x+g_{10},\,y+g_{01})$ and $\nabla\,{\mathbb{P}}_1^t=I_2$. From (\ref{te2b}), $$G_{1,0,1}\Lambda_{1,0}=-\left(L_0^{(0)}\right)^t\left(D_1,\,D_2\right)N_1^{(0)}G_{1,0,1}.$$
			That is $\Lambda_{1,0}=-(D_1,D_2)$ for $G_{1,0,1}=L_0^{(0)}=N_1^{(0)}=I_2$ see Proposition \ref{prop4} and Eq. (\ref{e8a}). Therefore writing $D_i=(d_{i1},\,d_{i2}),\,i=1,2$, we obtain
			$div(\rho\Phi)=\rho\mathbb{P}_1^t(D_1,D_2)=\rho\left(\psi_{1},\psi_{2}\right)$
			with $\psi_1(x,y)=xd_{11}+yd_{21}+g_{10}d_{11}+g_{01}d_{21}$ and $\psi_2(x,y)=xd_{12}+yd_{22}+g_{10}d_{12}+g_{01}d_{22}$ which are polynomials of total degree one, for $det(D_1,D_2)\neq\,0$.
			
			\item [Step 3] $(b)\Rightarrow\,(e)\Rightarrow\,(a)$ which  taking into account Step 1 is equivalent to $(e)\Leftrightarrow\,(a)$.\\
			\,[Step~3.1] $(b)\Rightarrow\,(e)$
			Use the relation (\ref{cprod}) to rewrite the right side of the expansion
			\begin{equation*}
				\left(\Phi\otimes\,I_{2^m}\right)\nabla\,\mathbb{Q}_{n,m}=\sum_{k=0}^{n+1}\left(I_{2}\otimes\mathbb{Q}_{k,m}\right)A_k^{n,m}
			\end{equation*}
			and multiply, for a fixed $0\leq\,j\leq n$, both sides, from the left, by 
			\begin{equation*}
				\left(\begin{matrix}
					\mathbb{Q}_{j,m}^t&0\\	0&\mathbb{Q}_{j,m}^t
				\end{matrix}\right)\left(\begin{matrix}
					\rho_{m}&0\\ 0&\rho_{m}
				\end{matrix}\right).
			\end{equation*}
			Observe that $\rho_{m+1}=\rho_{m}\left(\Phi\otimes\,I_{2^m}\right)$ to have 
			\begin{equation*}
				\left(\begin{matrix}
					\mathbb{Q}_{j,m}^t&0\\	0&\mathbb{Q}_{j,m}^t
				\end{matrix}\right)\rho_{m+1}\mathbb{Q}_{n-1,m+1}=\sum_{k=0}^{n+1}\left(\begin{matrix}\mathbb{Q}_{j,m}^t\rho_{m}\mathbb{Q}_{k,m}&0\\
					0&\mathbb{Q}_{j,m}^t\rho_{m}\mathbb{Q}_{k,m}\end{matrix}\right)A_k^{n,m}.
			\end{equation*}
			Integrate both sides on the domain $\Omega$ and use the fact $\left\{\mathbb{Q}_{n,m}\right\}_{n}$ (resp. $\left\{\mathbb{Q}_{n-1,m+1}\right\}_{n}$) is orthogonal with respect to $\rho_{m}$ (resp. $\rho_{m+1}$) to obtain
			\begin{eqnarray*}
				\left(\begin{matrix}H_{j,m}&0\\
					0&H_{j,m}\end{matrix}\right)A_j^{n,m}&=&\int_{\Omega}	\left(\begin{matrix}
					\mathbb{Q}_{j,m}^t&0\\	0&\mathbb{Q}_{j,m}^t
				\end{matrix}\right)\rho_{m+1}\mathbb{Q}_{n-1,m+1}dxdy\\
				&=&\begin{cases}
					&0\;{\rm if}\; 0\leq\,j<n-1\\
					&M_{n-1,m}\;{\rm if}\; j=n-1,
				\end{cases} 
			\end{eqnarray*}
			where $H_{j,m}=\int_{\Omega}\mathbb{Q}^t_{j,m}\rho_{m}\mathbb{Q}_{j,m}dxdy$ and $M_{n-1,m}$ are invertible matrices. Therefore, $A_{j}^{n,m}=0$, for $j=0,..,n-2$ and $A_{n-1}^{n,m}$ is an invertible matrix.\\ 
			\,[Step~3.2] $(e)\Rightarrow\,(a)$
			Let
			\[\frac{div(\rho\Phi)}{\rho}=\sum_{n=0}^{\infty}\mathbb{P}^t_nA_n\] be the formal Fourier expansion of the function $\frac{div(\rho\Phi)}{\rho}$ in the system $\{\mathbb{P}_n\}_{n\in \mathbb{N}}$. Since the system is orthogonal with respect to $\rho$, the coefficients $A_n$ are given by
			\[\int_{\Omega}\mathbb{P}_n\mathbb{P}_n^tdxdy\,A_n=\int_{\Omega}\mathbb{P}_n div(\rho\Phi)dxdy.\]
			Take into account the relation
			\begin{equation*}
				div\left(\rho\Phi\otimes\mathbb{P}_n\right)=\mathbb{P}_ndiv\left(\rho\Phi\right)+\left(\partial_x\mathbb{P}_n,\partial_y\mathbb{P}_n\right)\rho\Phi,
			\end{equation*} 
			obtained by direct computation,  as well as the boundary condition (\ref{bc}) to obtain
			\begin{equation*}
				\int_{\Omega}\mathbb{P}_n\mathbb{P}_n^t\rho\,dxdy\,A_n=-\int_{\Omega}\left(\partial_x\mathbb{P}_n,\partial_y\mathbb{P}_n\right)\rho\Phi\,dxdy.
			\end{equation*}
			Transpose and take into account the fact that the matrix $\Phi$ is symmetric to obtain
			
			\begin{equation*}
				A_n^t\int_{\Omega}\rho\mathbb{P}_n\mathbb{P}_n^tdxdy=-\int_{\Omega}\rho\Phi\nabla\mathbb{P}_n^t\,dxdy.
			\end{equation*}
			Therefore $A_0=0$. Use the assumption to get $A_n=0$, $n>1$ and
			
			\begin{align*}
				A_1^t\int_{\Omega}\rho\mathbb{P}_1\mathbb{P}_1^tdxdy=&-\int_{\Omega}\left(\begin{matrix}
					\rho\mathbb{P}_2^t&0\\
					0&\rho\mathbb{P}_2^t
				\end{matrix}\right)A_2^{1,0}dxdy-\int_{\Omega}\left(\begin{matrix}
					\rho\mathbb{P}_1^t&0\\
					0&\rho\mathbb{P}_1^t
				\end{matrix}\right)A_1^{1,0}dxdy\\ &-\int_{\Omega}\left(\begin{matrix}
					\rho\mathbb{P}_0^t&0\\
					0&\rho\mathbb{P}_0^t
				\end{matrix}\right)A_0^{1,0}dxdy.
			\end{align*}
			Use the fact that the system $\{\mathbb{P}_n\}_n$ is orthogonal with respect to $\rho$ to obtain 
			\begin{equation*}
				A_1^t\int_{\Omega}\rho\mathbb{P}_1\mathbb{P}_1^tdxdy=-
				\left(\begin{matrix}
					a_0&0\\
					0&a_0
				\end{matrix}\right){A_0^{1,0}},\;\;a_0=\int_{\Omega}\rho\mathbb{P}_0^tdxdy.
			\end{equation*}
			Therefore $A_1=\left(\begin{matrix}
				b_0&0\\
				0&b_1
			\end{matrix}\right)$, with $b_0b_1\neq\,0$  for $\int_{\Omega}\rho\mathbb{P}_1\mathbb{P}_1^tdxdy$ and $A_0^{1,0}$ are invertible. Thus,
			\begin{eqnarray*}
				{div(\rho\Phi)}&=&-{\rho}\mathbb{P}_1^t\left(\begin{matrix}
					b_0&0\\
					0&b_1
				\end{matrix}\right)\\
				&=&\left(\psi_{1},\,\psi_{2}\right),
			\end{eqnarray*}
			where $\psi_{1}$ and $\psi_{2}$ are polynomials of degree 1 of the variable $x$ and $y$ respectively.
		\end{enumerate}
	\end{proof}
	
	We now propose the following definition of classical orthogonal polynomials in two variables. 
	\begin{definition}\label{cweight} Let $\Omega$ be a simply connected open subset of $\mathbb{R}^2$. A weight function $\rho$ on $\Omega$ is classical if there exists a symmetric {and positive definite} {$2$}-matrix  vector polynomial $\Phi$ of total degree at most $2$ satisfying the Neumann boundary condition (\ref{bc}) and the differential system (\ref{te4a})-(\ref{te4b}), and there exist two polynomials $\psi_i(x,y)=X^tD_i+E_i,\,i=1,2$ of total degree $1$ such that $det(D_1,D_2)\neq\,0$ and  $$div\left(\rho\Phi\right)=\rho\left(\psi_1,\,\psi_2\right).$$   
	\end{definition}
	\begin{definition}\label{cop} Let $\Omega$ be a simply connected open subset of $\mathbb{R}^2$ and $\rho$ a  weight function on $\Omega$. A family of vector polynomials in two variables  $\{\mathbb{P}_n\}$, orthogonal with respect to $\rho$ is classical if $\rho$ is classical. 
	\end{definition}
	
	\begin{remark}\label{remark} \hspace{1cm}\\
		\begin{enumerate}
			\item Since $\Phi$ is symmetric and positive definite, the successive tensor product  $\Phi^{\otimes m},m\geq 1$ involved in the characterization theorem is symmetric and positive definite. 
			\item We can observe that $det(\Phi^{\otimes m})=(det(\Phi))^{2^{m-1}m},m\geq 1.$
			\item If the family $\{\mathbb{P}_n\}_n$ is monic, that is $\mathbb{P}_n=X_n+\dots$, the Rodrigues formula (\ref{Rodrigues}) reads
			\[\mathbb{P}_n^t=\tfrac{(-1)^n}{\rho}div^{(n)}\left(\rho\Phi^{\otimes n}\right)\nabla^{(n)}X_n^t\prod_{j=0}^{n-1}\Lambda_{n,j}^{-1}.\]
		\end{enumerate}
	\end{remark}
	
	\begin{remark}\label{GR}\hspace{1cm}\\
		\begin{enumerate}
			\item If $m=1$, the orthogonality condition in Theorem\,\ref{th1}(b) reads as $\int_{\Omega}\left(\nabla\mathbb{P}_{n+1}^t\right)^t\rho\Phi\nabla\mathbb{P}_{j+1}^t=0,n\neq j$ which is the weighted version of \cite[Theorem 4.7]{AMFPP} with $h=1$.
			\item If $m=2$, the orthogonality condition in Theorem\,\ref{th1}(b) reads as $\int_{\Omega}\left(\nabla^{(2)}\mathbb{P}_{n+2}^t\right)^t\rho\Phi^{(2)}\nabla^{(2)}\mathbb{P}_{j+2}^t=0,n\neq j$, where $\Phi^{(2)}=\Phi\otimes\Phi$. Observing that \[\nabla^{\{2\}}\mathbb{P}_{n+2}^t=\left(\begin{matrix}
				\partial_x^2\mathbb{P}_{n+2}^t\\
				2\partial_{xy}^2\mathbb{P}_{n+2}^t\\
				\partial_y^2\mathbb{P}_{n+2}^t\end{matrix}\right)=\left(\begin{matrix}
				1\,0\,0\,0\\
				0\,1\,1\,0\\
				0\,0\,0\,1
			\end{matrix}\right)\left(\begin{matrix}
				\partial_x^2\mathbb{P}_{n+2}^t\\
				\partial_{xy}^2\mathbb{P}_{n+2}^t\\
				\partial_{yx}^2\mathbb{P}_{n+2}^t\\\partial_y^2\mathbb{P}_{n+2}^t\end{matrix}\right)=\left(\begin{matrix}
				1\,0\,0\,0\\
				0\,1\,1\,0\\
				0\,0\,0\,1
			\end{matrix}\right)\nabla^{(2)}\mathbb{P}_{n+2}^t
			\]
			and the matrix is not invertible, the families $\{\nabla^{(2)}\mathbb{P}_{n+2}^t\}_n$ and $\{\nabla^{\{2\}}\mathbb{P}_{n+2}^t\}_n$ are not orthogonal with respect to the same weight. Therefore for $m=2$  Theorem\,\ref{th1}(b) and  \cite[Theorem 4.7]{AMFPP} with $h=2$ study different families of orthogonal polynomials.
		\end{enumerate}
	\end{remark}
	{
		\begin{remark}
			In \cite{Area2018}, the author investigates families of orthogonal polynomials $\{p_n\}_n$ in one or several variables that are governed by hypergeometric structure. That is, for any nonnegative integer $n$, $p_n$ and its derivatives, differences, $q$-differences or divided-differences  satisfy equation of the same type. This leads to second-order differential, difference and q-difference equation for orthogonal polynomials in continuous variables, discrete variables and $q$-discrete variables respectively. A difference arises in the case of nonuniform lattices where bivariate Racah and bivariate $q$-Racah satisfy a fourth-order divided-difference equation of hypergeometric type. \\In our approach, for bivariate continuous orthogonal polynomials, we replace derivatives by gradients ($\nabla^{(n)}B=\nabla(\nabla^{(n-1)}B),\,n=1,2,...$) and established equivalence between  matrix version of the equation \cite[p.175]{Area2018} and four other properties (see Theorem \ref{th1}). This extends \cite[§5]{Al-Salam 1990} to orthogonal polynomials in two continuous variables.\\ It would be very interesting to formulate and prove an analogue of Theorem \ref{th1} for orthogonal polynomials in two discrete variables, two $q$-discrete variables and two nonuniform lattices. In each of these cases, an analogue of the Pearson equation (\ref{te2a}) should be found.\\
			For bivariate  discrete variables, approximating $\partial_x$ and  $\partial_y$ by the forward operators \[\Delta_1 f(x,y)=f(x+1,y)-f(x,y)\, {\rm and}\, \Delta_2 f(x,y)=f(x,y+1)-f(x,y)\] into the divergence formula $div\left(\begin{matrix}
				A\\
				B
			\end{matrix}\right)=\partial_xA+\partial_yB$, the Pearson equation (\ref{te2a}) becomes 
			\begin{equation}\label{discretepear}
				\begin{cases}
					\Delta_1(\phi_{1,1}\rho)+\Delta_2(\phi_{2,1}\rho)&=\psi_1\rho \\
					\Delta_1(\phi_{1,2}\rho)+\Delta_2(\phi_{2,2}\rho)&=\psi_2\rho.
				\end{cases}
			\end{equation}
		\end{remark}
		One can easily check that the weight function $\rho^{N,p_1,p_2}(x,y)= \frac {N!\,{p_{{1}}}^{x}{p_{{2}}}^{y} \left( 1-p_{{1}}-p_{{2}}
			\right) ^{N-x-y}}{x!\,y!\, \left( N-x-y \right) !}$ ,$x\geq 0$, $y\geq 0,\,$ $0\leq x+y\leq {N},$  $p_1>0,$ $p_2>0$ and $0<p_1+p_2<1$, (see \cite[Eq. (2.25)]{Area2018}) for bivariate Kravchuk polynomials satisfies the discrete Pearson equation (\ref{discretepear}) with $\phi_{1,1}(x,y)=\left((p_1-1)(p_2-1)-p_1p_2\right)x$, $\phi_{1,2}(x,y)={\phi_{2,1}(x,y)}=0$, $\phi_{2,2}(x,y)=\left((p_1-1)(p_2-1)-p_1p_2\right)y$, $\psi_1(x,y)=(p_2-1)x-p_1y+Np_1$ and $\psi_2(x,y)=(p_1-1)x-p_2y+Np_2$.\\
		A more general extension of our definition to nonuniform lattices (including bivariate $q$-Racah, Racah, and Askey–Wilson polynomials) will be the subject of future work. The works of Foupouagnigni, K., Nangho, and Mboutngam \cite{FKM}, together with that of K. Nangho and K. Jordaan \cite{NJ}, are expected to play an important role in this study.
	}
	
	\section{Connections and Examples}
	In this section, we build a bridge between the study of bivariate orthogonal polynomials with respect to weight functions and moment functionals, and investigate connections between the Rodrigues formula (\ref{e4b}) given by \'Alvarez de Morales et al. (cf. \cite{Miguel2009}) and (\ref{Rodrigues}), the Rodrigues formula developed in this work. We also give example of bivariate weight function that are classical in the sense of the Definition \ref{cweight}.
	
	\subsection{Connections}
	Our definition of classical orthogonal polynomials is connected to that of \cite[Section 3]{Fern} in the following way:
	\begin{proposition}\label{breach} Let $\left\{\mathbb{P}_n\right\}_n$ be a family of polynomials, orthogonal with respect to a classical weight function $\rho$ on a simply connected open subset $\Omega$ of $\mathbb{R}^2$. Let $v$ be the moment functional defined by
		\begin{equation}\label{momentf}
			\langle v,p\rangle=\int_{\Omega}p\rho dxdy,\; p\in \mathcal{P}.
		\end{equation}
		Then $\left\{\mathbb{P}_n\right\}_n$ is orthogonal with respect to 
		$v$, $v$ satisfies the Pearson-type equation $div(\Phi v)=(\psi_1,{\psi_2})v$ and det$\langle v,\,\Phi\rangle\neq 0$.
	\end{proposition}
	\begin{proof}
		\begin{equation*}
			\langle v,\,\mathbb{P}_m\mathbb{P}_n^t\rangle= \int_{\Omega}\mathbb{P}_m\mathbb{P}_n^t\rho dxdy=\delta_{m,n}H_n,
		\end{equation*}
		where $H_n$ is an invertible matrix, for $\left\{\mathbb{P}_n\right\}_n$ is orthogonal with respect to $\rho$. Therefore $\{\mathbb{P}_n\}_n$ is orthogonal with respect to $v$ (cf.  \cite[Definition 3.1]{Fern}).
		Let us prove that $div(\Phi v)=(\psi_1,{\psi_2})v$.  $div(\Phi v)=\left(\partial_x(\phi_{1,1}v)+\partial_y(\phi_{2,1}v)),\,\partial_x(\phi_{1,2}v)+\partial_y(\phi_{2,2}v))\right)$.  $\partial_x$ and $\partial_y$ here are in a weak sense, that is for functional $w$ on $\mathcal{P}$, $\langle \partial_x w,\,p\rangle=-\langle  w,\,\partial_xp\rangle$ and $\langle \partial_y w,\,p\rangle=-\langle  w,\,\partial_y p\rangle$.. Therefore, for $p\in\mathcal{P}$,
		$\langle\partial_x(\phi_{1,1}v)+\partial_y(\phi_{2,1}v),\,p\rangle=-\langle\phi_{1,1}v,\, \partial_xp\rangle-\langle\phi_{2,1}v,\partial_yp\rangle$.
		Mindful of the left multiplication of a functional by a polynomial, we have 
		$$\langle\partial_x(\phi_{1,1}v)+\partial_y(\phi_{2,1}v),\,p\rangle=-\langle v,\, \phi_{1,1}\partial_xp\rangle+\langle v,\,\phi_{2,1}\partial_yp\rangle.$$ Therefore
		$$\langle\partial_x(\phi_{1,1}v)+\partial_y(\phi_{2,1}v),\,p\rangle=-\int_{\Omega}\left(\phi_{1,1}\partial_xp+\phi_{2,1}\partial_yp\right)\rho dxdy.$$ Integration by parts of the right hand side leads to:
		
		\begin{equation}\langle\partial_x(\phi_{1,1}v)+\partial_y(\phi_{2,1}v),\,p\rangle=-\int_{\Omega}\partial_x(\rho\phi_{1,1}p)+\partial_y(\phi_{2,1}p) dxdy+\int_{\Omega}\left(\partial_x(\rho\phi_{1,1})+\partial_y(\rho\phi_{2,1})\right)p dxdy.\label{eq1}\end{equation}
		In a similar way we obtain
		\begin{equation}\langle\partial_x(\phi_{1,2}v)+\partial_y(\phi_{2,2}v),\,p\rangle=-\int_{\Omega}\partial_x(\rho\phi_{1,2}p)+\partial_y(\phi_{2,2}p) dxdy+\int_{\Omega}\left(\partial_x(\rho\phi_{1,2})+\partial_y(\rho\phi_{2,2})\right)p dxdy.\label{eq2}\end{equation} Combining \eqref{eq1} and \eqref{eq2} yields
		$$\langle div(\Phi v),p\rangle=-\int_{\Omega}div(\rho\Phi p)dxdy+\int_{\Omega}div(\rho\Phi)pdxdy.$$
		Since $\rho$ is classical, $div(\rho\Phi)=(\psi_1,\,\psi_2)\rho$,  where  $\psi_{1}$ and $\psi_{2}$ are two polynomials of degree 1. Moreover, from (\ref{bc}), \begin{equation*}
			\lim\limits_{j\rightarrow\infty}1_{\partial\Omega_j}\left(\rho \Phi\,\nabla\,u\right)\cdot\overrightarrow{n_j}=0,\; \Omega_j=\Omega\cap B(O,j),\; {\rm for~all}\; u\in\mathcal{P}.
		\end{equation*}
		Since this boundary condition is satisfied for all polynomials $u$, it will still hold when replacing $\nabla u$ by $p\nabla u$. So, \begin{equation*}
			\lim\limits_{j\rightarrow\infty}1_{\partial\Omega_j}\left(\rho \Phi\,p\nabla\,u\right)\cdot\overrightarrow{n_j}=0,\; \Omega_j=\Omega\cap B(O,j),\; {\rm for~all}\; u\in\mathcal{M}_{1,n}\left(\mathcal{P}\right)
		\end{equation*}
		which is (\ref{Neumannc}) with $A=p\Phi$. Hence, taking $A=p\Phi$, $m=1$, $M=1$, and $N=(x,y)$ in Proposition \ref{p2a}, we obtain
		$\int_{\Omega}div(\rho\Phi p)=0$. Therefore 
		$\langle div(\Phi v),p\rangle=\int_{\Omega}\rho(\psi_1,\,\psi_2)pdxdy=\langle (\psi_1,\,\psi_2)v,p\rangle$, that is $div(\Phi v)=(\psi_1,\,\psi_2)v$. As for the condition det$\langle v,\Phi\rangle\neq 0$, we take $m=0$ and $n=1$ in the structure relation (\ref{SR}) and take into account the fact that $\mathbb{P}_n$ is monic to obtain 
		\begin{equation*}
			\Phi=\left(\begin{matrix}
				\mathbb{P}_2^t&0\\
				0&\mathbb{P}_2^t
			\end{matrix}\right)A_2^{1,0}+\left(\begin{matrix}
				\mathbb{P}_1^t&0\\
				0&\mathbb{P}_1^t
			\end{matrix}\right)A_1^{1,0}+A_0^{1,0},
		\end{equation*}
		where $A_0^{1,0}$ is an invertible matrix. Multiplying both sides with $\rho$ and integrating on $\Omega$, we obtain $\int_{\Omega}\Phi\rho dxdy=A_0^{1,0}\int_{\Omega}\rho dxdy$. Since $A_0^{1,0}$ is invertible and $\int_{\Omega}\rho dxdy=\int_{\Omega}\rho \mathbb{P}_0\mathbb{P}_0^tdxdy\neq 0$, det$\int_{\Omega}\Phi\rho dxdy\neq 0$, that is $det\langle v,\Phi\rangle \neq 0$.
	\end{proof}
	\begin{corollary} If a family of orthogonal polynomials is classical  in the sense of the Definition \ref{cop}, then it is classical in the sense of \cite[Section 3]{Fern}.
	\end{corollary}
	\begin{proposition}\label{Rconnect}
		Let $\Omega$ be a simply connected subset of $\mathbb{R}^2$ and $\rho$ be a classical weight function on $\Omega$, in the sense of the Definition \ref{cweight}. Then for all $n\geq 1$, there exists an orthogonal matrix $O_n$ such that
		\[\mathbb{Q}_n^t=\tfrac{(-1)^n}{\rho}\left[div^{(n)}\left(\rho\Phi^{\otimes n}\right)\nabla^{(n)}X_n^t\prod_{j=0}^{n-1}\Lambda_{n,j}^{-1}\right]O_n,\]
		where $\mathbb{Q}_n$ is the polynomial given by the formula \cite[(35)]{Miguel2009}. 
	\end{proposition}
	\begin{proof}
		From Proposition \ref{breach}, the moment functional (\ref{momentf})  associated with $\rho$ satisfies $div(\Phi v)=(\psi_{1},\,\psi_{2})v$ and det$\langle v,\Phi\rangle\neq 0$. So it is a classical moment functional. Since  $div(\rho\Phi)=\rho(\psi_{1},\,\psi_{2})$, $\rho$ is a symmetric factor for the partial differential associated with $v$ (cf. \cite[Proposition 5.1]{Miguel2009}). Moreover, since $\Phi$ satisfies the system (\ref{te1a})-(\ref{te1b}), by means of product rule, we obtain 
		\begin{empheq}[left=\empheqlbrace]{align}
			\partial_x(\phi_{1,1}\varPhi)+\partial_y(\phi_{2,1}\partial_y\varPhi)=\varPhi P_0,\nonumber\\
			\partial_x(\phi_{1,2}\partial_x\varPhi)+\partial_y(\phi_{2,2}\varPhi)=\varPhi P_1\nonumber,
		\end{empheq}
		where  $P_0=\nabla(\phi_{1,1},\,\phi_{2,1})+I_2\partial_x\phi_{1,1}+I_2\partial_x\phi_{2,1}$ and $P_1=\nabla(\phi_{1,2},\,\phi_{2,2})+I_2\partial_x\phi_{1,2}+I_2\partial_x\phi_{2,2}$ are $2$-matrix polynomials of total degree at most 1.Therefore (cf. \cite[Theorem 6.5]{Marcellan}) the family of polynomials $\mathbb{Q}_n^t=\frac{1}{\rho}div^{\{n\}}\left(\rho\Phi^{\{n\}}\right)$ is orthogonal with respect to $v$. $div^{\{n\}}$ is given by (\ref{mdiv}) and $\Phi^{\{n\}}$ is the $n$-th second Kronecker product of $\Phi$. Let $\left\{\mathbb{P}_n\right\}_n$ be the family of monic polynomials, orthogonal with respect to $\rho$. From Theorem \ref{th1}, 
		\[\mathbb{P}_n^t=\tfrac{(-1)^n}{\rho}div^{(n)}\left(\rho\Phi^{\otimes n}\right)\nabla^{(n)}\mathbb{P}_n^t\prod_{j=0}^{n-1}\Lambda_{n,j}^{-1}.\] Since $\mathbb{P}_n$ is monic $\mathbb{P}_n^t=X^t_n+X_{n-1}^tG_{n,n-1}\dots$. Therefore $\nabla^{(n)}\mathbb{P}_n^t=\nabla^{(n)}X_n^t$ and 
		\[\mathbb{P}_n^t=\tfrac{(-1)^n}{\rho}div^{(n)}\left(\rho\Phi^{\otimes n}\right)\nabla^{(n)}X_n^t\prod_{j=0}^{n-1}\Lambda_{n,j}^{-1}.\] Since $\left\{\mathbb{P}_n\right\}_n$ is orthogonal with respect to the functional $v$, there exist an orthogonal matrix $O_n$ such that (cf.\cite[Theorem 3.2.14]{Yuan}) $\mathbb{Q}^t_n=\mathbb{P}^t_nO_n$, $n\geq 1$.
		
	\end{proof}
	
	{\begin{remark}\label{pearrem}
			The Rodrigues formula (\ref{Rodrigues}) is obtained by combining (\ref{te3}) and (\ref{te2b}) (see Step 2 of the proof of the Theorem \ref{th1}). Observing that (\ref{te2b}) itself is established by induction using the Pearson equation (\ref{peaeq}) and the differential system (\ref{te1a})-(\ref{te1b}), and (\ref{te2b}) can be obtained by applying the successive gradient $\nabla^{(m)}$ on the equation
			\begin{equation}\label{deq0}
				\left(\left(\varPhi\nabla\right)\cdot\nabla\right)\mathbb{P}_{n+m}^t+\psi_1\partial_x\mathbb{P}_{n+m}^t+{\psi_2}\partial_y\mathbb{P}_{n+m}^t+\mathbb{P}_{n+m}^t\Lambda_n=0,
			\end{equation}
			we deduce the following: If a family of orthogonal polynomials in two variables $\{\mathbb{P}_n\}_n$ is a solution of (\ref{deq0}), where $\Phi\in\mathcal{M}_2\left(\mathcal{P}\right)$ is a symmetric matrix such that there exist $\rho$, a solution of the Pearson equation $div(\rho\Phi)=\rho(\psi_1,\,\psi_2)$, then $\left\{\mathbb{P}_n\right\}_n$ satisfy the Rodrigues formula (\ref{Rodrigues}).
	\end{remark}}
	
	\subsection{Examples}
	\subsubsection{ Orthogonal polynomials on a triangle}
	We prove that a family of orthogonal polynomials  with respect to the weight function \cite[Eq. (2.4.1)]{Miguel2009} $\rho(x,y)=x^{\alpha}y^{\beta}(1-x-y)^{\gamma}$,$ \, \alpha,\,\beta,\,\gamma>-1$ defined on the triangle
	$$\Omega=\left\{(x,y)\in\mathbb{R}^2;x,y\geq 0\,\text{and}\,x+y\leq 1\right\}$$ is classical in our sense and give, for this family, the matrix $\Lambda_{n,j}$
	involved in the Rodrigues formula developed in this work.\\
	After straightforward computations
	\begin{equation*}
		div(\varPhi\,\rho)=\varPsi^t\,\rho,\text{with}\,\varPhi=\left(\begin{matrix}
			x(1-x) & -xy\\
			-xy & y(1-y)
		\end{matrix}\right), \varPsi=\left(\begin{matrix}
			-\left( \alpha+\beta+\gamma+3\right) x+\alpha+1\\
			-\left( \alpha+\beta+\gamma+3\right) y+\beta+1
		\end{matrix}\right)
	\end{equation*} 
	and the matrix $\Phi$ satisfies the system (\ref{te1a})-(\ref{te1b}).
	Therefore, for the weight function $\rho(x,y)=x^{\alpha}y^{\beta}(1-x-y)^{\gamma}$,$ \, \alpha,\,\beta,\,\gamma>-1$ to be classical in the sense of  the Definition\ref{cweight}  it remains for us to prove that the boundary condition (\ref{bc}) is fulfilled.\\ Since $\Omega$ here is a triangle, there is $j_0\geq 0$ such that for $j\geq j_0$, $\Omega\subset B(O,j)$. Therefore for $j\geq j_0$, $\Omega_j=\Omega\cap B(O,j)=\Omega$ and $$\partial\Omega_j=\partial\Omega=\left\{(x,0);0\leq x\leq 1\right\}\cup\left\{(0,y);0\leq y\leq 1\right\}\cup\left\{(x,y);0\leq x,y,\,\text{and}\,x+y= 1\right\}.$$
	So, let $(x,y)\in \partial\Omega_j$, $j\geq j_0$.
	\begin{itemize}
		\item If $(x,y)\in \left\{(x,0);\,0\leq x\leq 1\right\}$ then, the outward normal at $(x,y)$ is $\overrightarrow{n}_j=(0,1)^t$. Observing that
		$\rho\Phi\nabla u=\rho(x,y)(x(1-x)\partial_xu-xy\partial_yu,-xy\partial_xu+y(1-y)\partial_yu)^t$, we have
		$(\rho\Phi\nabla u)\cdot\overrightarrow{n}_j=(1-x-y)^{\gamma}(-x^{\alpha+1}y^{\beta+1} +x^{\alpha}y^{\beta+1}(1-y)\partial_yu)$. Letting $y$ go to $0$ and taking into account the fact that $\beta>-1$, we obtain $(\rho\Phi\nabla u)\cdot\overrightarrow{n}_j=0$. Thus $\lim_{j\rightarrow\infty}(\rho\Phi\nabla u)\cdot\overrightarrow{n}_j=0$. In a similar way, we prove that on  $\left\{(0,y);\,0\leq y\leq 1\right\}$,  $\lim_{j\rightarrow\infty}(\rho\Phi\nabla u)\cdot\overrightarrow{n}_j=0$.
		\item If $(x,y)\in \left\{(x,y);\,0\leq x,\, y,\:x+y=1\right\}$, then $\overrightarrow{n}_j=\tfrac{1}{\sqrt{2}}(1,1)$ and\\
		$\left(\rho(x,y)\Phi\nabla u\right)\cdot \overrightarrow{n}_j=\tfrac{1}{\sqrt{2}}x^{\alpha}y^{\beta}(1-x-y)^{\gamma+1}(x\partial_xu+y\partial_yu)$ which is equal to $0$, for $x+y-1=0$ and $\gamma>-1$. Hence $\lim_{j\rightarrow\infty}(\rho\Phi\nabla u)\cdot\overrightarrow{n}_j=0$.
	\end{itemize}

	For this family of polynomials, we obtained after computation the matrix  $\Lambda_{n,m}$ involved in the Rodrigues Formula (Theorem \ref{th1} (d)) as 
	$\Lambda_{n,j}=(n-j)(n+j+2+\alpha+\beta+\gamma)I_{n+1}$. Assuming the polynomial $\mathbb{P}_n$ to be monic, $\mathbb{P}^t_n=X_n^t+X_{n-1}^tG_{n,n-1}+\dots$, $\nabla^{(n)}\mathbb{P}_n^t=\nabla^{(n)}X_n.$ Therefore the Rodrigues formula Theorem \ref{th1}(d) reads
	\[\mathbb{P}_n^t(x,y)=\frac{(-1)^n}{n!(\alpha+\beta+\gamma+n+2)_nx^{\alpha}y^{\beta}\left(1-x-y\right)^{\gamma}}div^{(n)}\left(x^{\alpha}y^{\beta}\left(1-x-y\right)\Phi^{\otimes n}\right)\nabla^{(n)}X_n^t.\]
	
	From this formula, we can obtain explicit expansion of $\mathbb{P}_0^t$, $\mathbb{P}_1^t$, $\mathbb{P}_2^t$ and $\mathbb{P}_3^t$ as follows
	\begin{eqnarray*}
		\mathbb{P}_0^t(x,y)&=&1, \\
		\mathbb{P}_1^t(x,y)&=&(x-{\tfrac {\alpha+1}{3+\alpha+\beta+\gamma}},y-{\tfrac {\beta+1}{3+\alpha+\beta+\gamma}}),\\
		\mathbb{P}_2^t(x,y)&=&(x^2-{\tfrac { 2\left( \alpha+2 \right) }{5+\alpha+\beta+\gamma}}x+{
			\tfrac {\left(\alpha+1\right)_2}{ \left( 4+\alpha+\beta+\gamma \right)_2 }},xy-{\tfrac { \left( \beta+1 \right) x}{5+\alpha+\beta+\gamma}}-{\tfrac {
				\left( \alpha+1 \right) y}{5+\alpha+\beta+\gamma}}+{\tfrac {\left(\alpha+1\right)\left(\beta+1\right)}{ \left( 4+
				\alpha+\beta+\gamma \right)_2 }},\\
		&&y^2-{\tfrac { 2\left( \beta+2 \right)}{5+\alpha+\beta+\gamma}}y+{\tfrac 
			{\left(\beta+1\right)_2}{ 
				\left( 4+\alpha+\beta+\gamma \right)_2 }}),\\
		\mathbb{P}_3^t(x,y)&=&(p_{03},p_{12},p_{21},p_{30}),
	\end{eqnarray*}
	where
	\begin{eqnarray*}
		p_{30}(x,y)&=&x^3-{\tfrac{ 3\left( \alpha+3 \right)}{7+\alpha+\beta+\gamma}}{x}^{2}+
		{\tfrac { 3\left(\alpha+2\right)_2}{ \left( 6+
				\alpha+\beta+\gamma \right)_2  }}x-
		{\tfrac{\left(\alpha+1\right)_3}{\left(5+\alpha+\beta+\gamma \right)_3 }}\\
		p_{21}(x,y)&=&x^2y-{\tfrac { \left( \beta+1 \right) }{7+\alpha+\beta+\gamma}}{x}^{2} -{\tfrac { 2\left( \alpha+2 \right)}{7+\alpha+\beta+\gamma}xy
		}+{\tfrac {2\left(\alpha+2\right)\left(\beta+1\right)}{ \left( 6+\alpha+\beta+
				\gamma \right)_2}}x+{
			\tfrac { \left( {\alpha}+1 \right)_2}{ \left( 6+\alpha+
				\beta+\gamma \right)_2}}y-{\tfrac {
				\left(\alpha+1\right)_2\left(\beta+1\right)}{
				\left( 5+\alpha+\beta+\gamma \right)_3 }}\\
		p_{12}(x,y)&=&xy^2-{\tfrac {2\left( \beta+2 \right)}{7+\alpha+\beta+\gamma}}xy-{\tfrac {
				\left( \alpha+1 \right)}{7+\alpha+\beta+\gamma}}{y}^{2}
		+{\tfrac {\left(\beta+1\right)_2}{ \left( 6+\alpha+\beta+\gamma
				\right)_2 }}x+{\tfrac {2
				\left( \alpha+1 \right)\left( \beta+2\right) }{ \left( 6+\alpha+
				\beta+\gamma \right)_2 }}y-{\tfrac {\left(\alpha+1\right)\left(\beta+1\right)_2}
			{\left( 5+\alpha+\beta+\gamma \right)_3 }}\\
		p_{03}(x,y)&=&y^3-{\tfrac { 3\left( \beta+3 \right) }{7+\alpha+\beta+\gamma}}{y}^{2}+{\tfrac { 3\left(\beta+2 \right)_2 y}{ \left( 6+\alpha+
				\beta+\gamma \right)_2 }}-{\tfrac {\left(\beta+1\right)_3}{\left( 5+\alpha+\beta+
				\gamma \right)_3 }},
	\end{eqnarray*}
	where $(a)_n$ is the Pochhammer symbol defined by $(a)_n=a(a+1)\dots(a+n-1)$, $n>1$, $(a)_0=1$ with $a\in\mathbb{C}$ and $n=1,2,\dots\, .$
	
	It can be verified by direct computation that polynomials  $\mathbb{P}_0^t$, $\mathbb{P}_1^t$, $\mathbb{P}_2^t$ and $\mathbb{P}_3^t$ generated above satisfy the partial differential equation \cite[p.38]{Yuan}, for orthogonal polynomials on triangle,  with parameters $(\alpha, \beta,\,\gamma)$ replaced by 
	$(\alpha+\tfrac{1}{2}, \beta+\tfrac{1}{2},\,\gamma+\tfrac{1}{2})$.
	\begin{remark} The product of Hermite polynomials, the product of Laguerre polynomials, the product of Hermite and Laguerre polynomials,  and the product of Jacobi polynomials are also examples of classical orthogonal polynomials in our sense. 
	\end{remark}
	
	\subsubsection{Example 2:  Intriguing case (cf. Krall and Sheffer \cite[p. 362]{Krall}  and Littlejohn \cite{Littlejohn})} 
	{Consider the equation}{ 
		\begin{equation}\label{ks}
			3y\partial_x^2Y+2\partial_{xy}^2Y-x\partial_xY-y\partial_yY+nY=0.
		\end{equation}
		The explicit expression of $\Phi$ and $\Psi$ are 
		\begin{equation*}
			\varPhi=\left(\begin{matrix}
				3y & 1\\
				1 & 0
			\end{matrix}\right), \varPsi=\left(\begin{matrix}
				-x\\
				-y
			\end{matrix}\right)
		\end{equation*} 
		Krall and Sheffer  \cite[p.362]{Krall} proved that (\ref{ks}) has orthogonal polynomial sequences as solutions. The domain of orthogonality of this family of polynomial is not known. Moreover, the family  cannot be a positive definite orthogonal polynomial system (see \cite[Example 4.2]{Kim1997}) on a domain. In addition, det~$\Phi=1<0$, that is the matrix $\Phi$ is not positive definite. So the family is not classical in our sense. However observing that the matrix $\Phi$ satisfies the differential system  (\ref{te1a})-(\ref{te1b}) and the function $\rho(x,y)=\exp(y^3-xy)$ satisfies the Pearson equation (\ref{peaeq}), we derive from Remark \ref{pearrem} that the system of orthogonal vector polynomials $\left\{\mathbb{P}_n\right\}_n$, solution to (\ref{ks}), satisfies the Rodrigues formula (\ref{Rodrigues}) with ${\Lambda_{n,j}=(n-j)I_{n+1},j=0,1,...\,}$.
		Therefore, 
		\begin{equation*}
			\mathbb{P}_n^t(x,y)=\frac{(-1)^n}{{n!}\exp{(y^3-xy)}}div^{(n)}\left(\exp{(y^3-xy)}\Phi^{\otimes n}\right)\nabla^{(n)}X_n^t.
		\end{equation*}
		So, explicit expressions of  $\mathbb{P}_0,\,\mathbb{P}_1,\,\mathbb{P}_2,$ and $\mathbb{P}_3$.
		\begin{eqnarray*}
			\mathbb{P}_0(x,\,y)&=&1\\
			\mathbb{P}_1(x,\,y)&=&[  x,\,y]\\
			\mathbb{P}_2(x,\,y)&=& [-6\,y+{x}^{2},\,-1+xy,\,{y}^{2}]\\
			\mathbb{P}_3(x,\,y)&=&[-18\,xy+12+{x}^{3},\,-6\,{y}^{2}-2\,x+{x}^{
				2}y,\,y \left( -2+xy \right),\,{y}^{3}]\\
			\mathbb{P}_4(x,\,y)&=&[108\,{y}^{2}-36\,{x}^{2}y+48\,x+{x}^{4},\,
			30\,y-18\,x{y}^{2}-3\,{x}^{2}+{x}^{3}y,\,-6\,{y}^{3}+2-4\,xy+{x}^{2}{y}^
			{2},\\
			&&\,{y}^{2} \left( -3+xy \right),\,{y}^{4}].
		\end{eqnarray*}
		It can be verified by direct computation that polynomials  $\mathbb{P}_0^t$, $\mathbb{P}_1^t$, $\mathbb{P}_2^t$, $\mathbb{P}_3^t$ and $\mathbb{P}_4^t$ generated above satisfy the partial differential equation (\ref{ks}).
		From this example we have the following remark.}
	
	{
		\begin{remark}
			It is not sufficient for a family of orthogonal polynomials in two variables to satisfy the Rodrigues formula (\ref{Rodrigues}) stated in Theorem \ref{th1} in order to be classical in our sense. 
	\end{remark}}

	\section{Concluding remarks}
	In this paper we characterize polynomials in two {continuous} variables, orthogonal with respect to weight function satisfying (\ref{e5a}), the boundary condition (\ref{bc}) and the differential system (\ref{te1a})-(\ref{te1b}). We establish five equivalent properties and derive a definition of classical orthogonal polynomials (COP) in two variables as well as a bridge between the theory of COP with respect to weight functions and that based on moment functionals (see Proposition \ref{breach}).   The fact that the successive tensor product  $\Phi^{\otimes m},m\geq 1$ involved in the characterization theorem is symmetric and positive definite as $\Phi$ ensures the orthogonality of successive gradient of transpose of COP with respect to a weighted matrix. Moreover, the compact form of (\ref{te2b}) highlights the difference from (\ref{inieq}). The  Rodrigues formula (\ref{Rodrigues}) developed in this work is obtained by iterating the closed form of (\ref{te2b}) and is up to a multiplicative matrix factor equal to the one in \cite{Miguel2009}. The structure relation for successive gradients involve three terms which makes it different from the relation (\ref{FSR1}) developed in \cite{AMFPP}.  The properties appearing in the main theorem of this work are an analogue of those in {\cite[§5]{Al-Salam 1990}} for COP in one variable.  A natural question that arises is whether a classification of all the known families of orthogonal polynomials in two variables, that satisfy our definition, is possible. Work on an analysis along these lines is in progress. 
	
	\section{Acknowledgment} The first author would like to express his appreciation to the organizers of OPSFA 2019 during which the idea of this paper was conceived. The authors are grateful to the University of South Africa for an International General Research grant which supported a research visit by the first author to the University of South Africa for the period 1 Oct 2021--11 Dec 2021. Authors thank referees for giving many valuable suggestions.

	
	
	

\end{document}